\documentclass{amsart}
\usepackage{mathabx,mathtools,amsrefs,mathrsfs,math,lineno}
\usepackage[a4paper,margin=41mm]{geometry}

\newcommand{\smashp}{\mathbin{\#}}
\newcommand\A{{\mathscr A}}
\newcommand\B{{\mathscr B}}
\newcommand\HH{{\mathscr H}}
\newcommand\K{{\mathscr K}}
\DeclareMathOperator{\Sym}{Sym}
\DeclareMathOperator{\gr}{gr}

\newcommand{\Pf}{\mathcal P_{\!f}}
\newcommand{\id}{\mathrm{id}}

\numberwithin{equation}{section}

\newtheorem{mainthm}{Theorem}

\newtheorem{mainprop}[mainthm]{Proposition}

\theoremstyle{definition}\newtheorem{maindefn}[mainthm]{Definition}
\newtheorem*{defn*}{Definition}

\begin{document}
\title{On extensive amenability of algebras}
\author{Laurent Bartholdi}
\date{September 22, 2026}
\address{Institut Camille Jordan, Université de Lyon 1 \textnormal{\itshape and} Section de Mathématiques, Université de Genève}
\email{laurent.bartholdi@gmail.com}

\thanks{The author gratefully acknowledges support from the SNSF Advanced Grant TMAG-2\_216487/1.}
\begin{abstract}
For a module \(M\) over a cocommutative Hopf algebra \(\A\), we define
extensive amenability by amenability of its symmetric algebra as a module
over \(\Sym(M)\smashp\A\). Using the author's coalgebraic rounding and quotient
theorems, we prove that nonzero extensively amenable modules
are amenable, that every module over an amenable Hopf algebra is
extensively amenable, and that extensive amenability is preserved and
reflected by short exact sequences. For permutation modules this
definition agrees, over every field, with extensive amenability of
group actions. A final section gives the modifications for exterior
algebras and establishes the comparison between the two definitions
in several cases.
\end{abstract}
\maketitle

Amenability of groups developed into a thriving and fundamental notion in part because of the variety of equivalent definitions of the notion: almost-invariant sets, invariant functionals, subexponential decay of return probabilities, triviality of a suitable Poisson boundary, etc., see~\cite{bartholdi:amenability}. The equivalences between the definitions lead both to useful applications and a powerful method of proof of (non)amenability for diverse examples and constructions. For example, an extension of groups is amenable if and only if both subgroup and quotient are amenable, and the implications are of varying difficulty depending on the definition used.

It turned out that it is much more convenient to study more generally the amenability of $G$-sets; most notions and results extend, and one may define amenability of a group \emph{either} as amenability of the regular $G$-set ${}_G G$ \emph{ or equivalently } as amenability of all non-empty $G$-sets. Remarkably however, amenability does not propagate through extensions: if $f\colon X\twoheadrightarrow Y$ is a map of $G$-sets, then it is not true that ${}_G X$ is amenable if and only if ${}_G Y$ and all the fibre actions ${}_{G_y}(f^{-1}(y))$ are amenable.

This led Juschenko, Matte Bon, Monod and de la Salle to introduce \emph{extensive amenability}, a stronger property of $G$-sets, which also characterizes amenable groups and does propagate through extensions~\cite{juschenko-mattebon-monod-delasalle:extensive}*{Proposition~2.4}.

To define amenability of associative algebras, we have essentially only one definition at our disposal:
\begin{defn*}[Elek~\cite{elek:amenaa}]
  Let $\A$ be an algebra over a field $\Bbbk$. The left $\A$-module $M$ is
  \emph{amenable} if for every $\epsilon>0$ and every finite
  $S\le\A$ there is a finite $F\le M$ with
  \[\dim((F+S F)/F)<\epsilon\dim F.
  \]
  (Here and throughout, ``finite'' means ``finite-dimensional'' when applied to vector spaces. The strict inclusion `$<$' prevents $F=0$.)
\end{defn*}

The case in which $\A$ is a cocommutative Hopf algebra is of particular interest, covering both group rings and universal envelopes of Lie algebras, and we restrict ourselves to this case in this article. Note that the $0$ module is never amenable. The regular module ${}_\A\A$ is amenable if and only if every nonzero $\A$-module is amenable, by Corollary~\ref{cor:all-modules} below; in this case, $\A$ itself is called \emph{amenable}. In a previous article, I prove that amenability is preserved by Hopf quotients, Hopf subalgebras, and cleft extensions, see~\cite{bartholdi:hopfamen}*{Theorems~1.9, 2.2 and~2.5}.

One soon discovers that there is little structure on amenable modules over non-amenable Hopf algebras; for example, a direct sum of an amenable and a non-amenable module is amenable, so amenability need not pass to submodules or quotients. I propose the following definition, to recover preservation under exact sequences (see Theorem~\ref{mainthm:exact}):
\begin{maindefn}
  Let $M$ be a left $\A$-module. It is \emph{extensively amenable} if the symmetric algebra $\Sym(M)$ is amenable when viewed as a $\Sym(M)\#\A$-module.
\end{maindefn}
Here $\Sym(M)$ is a Hopf algebra with each $m\in M$ primitive, and $\Sym(M)\#\A$, which from now on will be written $M\wr\A$, is $\Sym(M)\otimes\A$ qua coalgebra, with multiplication and action
\[(r\smashp a)(s\smashp b)
   =\sum r(a_{(1)}s)\smashp a_{(2)}b,
 \qquad
 (r\smashp a)\cdot s=r(as).
\]
The induced action makes $\Sym(M)$ an $\A$-module algebra and coalgebra,
and $M\wr\A$ is a cocommutative Hopf algebra.

The zero module is extensively amenable, since $\Sym(0)=\Bbbk$, although it
is not amenable. More generally,
\begin{mainprop}\label{prop:locally-finite}
Every locally finite $\A$-module (namely, a module in which every finite subspace is contained in a finite submodule) is extensively amenable.
\end{mainprop}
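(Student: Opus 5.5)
We must show that $\Sym(M)$ is amenable as an $M\wr\A$-module when $M$ is locally finite. The plan is to use Følner subspaces made of low-degree polynomials in a large finite submodule. Fix $\epsilon>0$ and a finite $S\le M\wr\A$. Since $S$ is finite, it lies in $\Sym_{\le d}(M_0)\otimes T$ for some degree bound $d$, some finite subspace $M_0\le M$, and some finite $T\le\A$. Local finiteness gives a finite $\A$-submodule $N\supseteq M_0$. Write $n=\dim N$.

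First I will check that if $n=0$ then $\Sym(M)\supseteq\Sym(0)=\Bbbk$. Then $F=\Bbbk$ works, because $S$ acts on $\Bbbk\cdot 1$ only through $\Bbbk\otimes\A$, via the counit, so $SF\subseteq F$. This is why the zero module is extensively amenable.

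In general I take $F=\Sym_{\le k}(N)$, the polynomials of degree at most $k$ in $N$, with $k$ large. Because $N$ is an $\A$-submodule and $\A$ acts on $\Sym(M)$ by algebra automorphisms extended from $M$ (via the coproduct, which preserves degree), every $a\in\A$ maps $\Sym_{\le k}(N)$ into itself. Since $M_0\subseteq N$, multiplication by $r\in\Sym_{\le d}(M_0)$ sends $\Sym_{\le k}(N)$ into $\Sym_{\le k+d}(N)$. Hence
\[F+SF\subseteq \Sym_{\le k+d}(N).\]
This gives
\[\frac{\dim((F+SF)/F)}{\dim F}\le\frac{\binom{n+k+d}{n}-\binom{n+k}{n}}{\binom{n+k}{n}}.\]
For fixed $n$ and $d$, this ratio is $O(d/k)\to0$ as $k\to\infty$, because both binomials are polynomials in $k$ of degree $n$ with the same leading coefficient. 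So choosing $k$ large gives ratio $<\epsilon$, and $F\ne0$ automatically.

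The only point needing care is the invariance of $\Sym_{\le k}(N)$ under $\A$. The action of $\A$ on $\Sym(M)$ is the module-algebra action $a(st)=\sum(a_{(1)}s)(a_{(2)}t)$. By induction on degree, this preserves the subalgebra generated by the submodule $N$ together with its degree filtration. I expect this step to be the main, though mild, obstacle; everything else is a dimension count.
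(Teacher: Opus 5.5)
Your proposal is correct and is essentially the paper's proof: both take $\Sym^{\le k}(N)$ for a finite $\A$-submodule $N$ containing the variables of the coefficients in $S$, use that the $\A$-action preserves polynomial degree so that the expanded space lies in $\Sym^{\le k+d}(N)$, and conclude because the binomial ratio tends to $1$. One wording point: in general elements of $\A$ act through the module-algebra identity, not by algebra automorphisms (only group-likes do), but the degree preservation you need holds either way, as your last paragraph correctly says.
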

\begin{proof}
For given finite $S\le M\wr\A$, choose a finite $\A$-submodule $V\le M$
containing all variables in $S$'s polynomial coefficients. Set \(q=\dim V\),
and bound the degrees of those coefficients by \(d\). The $\A$-action
preserves polynomial degree. For
\(E_n=\Sym^{\le n}(V)\), the expanded space is contained in
\(\Sym^{\le n+d}(V)\), so its expansion ratio is at most
\[
  \left.\binom{n+d+q}{q}\right/\binom{n+q}{q}\to 1.\qedhere
\]
\end{proof}

The main results of this article are:
\begin{mainthm}[= Theorem~\ref{thm:ext=>amen}]
  Every nonzero extensively amenable $\A$-module is amenable.
\end{mainthm}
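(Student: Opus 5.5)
The plan is to extract a Følner subspace of $M$ from a Følner subspace of $\Sym(M)$. The tools are the compatibility of the $\A$-action with both the grading and the coalgebra structure of $\Sym(M)$, and the coalgebraic rounding and quotient theorems from~\cite{bartholdi:hopfamen}. Fix $\epsilon>0$ and a finite $S\le\A$; we may assume $1\in S$. Fix also a nonzero $m_0\in M$, which exists because $M\ne0$. Apply extensive amenability to the finite set $S'=S\cup\{m_0\}\le M\wr\A$. This gives a finite $F\le\Sym(M)$ with $\dim(F+S'F)/F<\delta\dim F$, where $\delta$ will be chosen depending on $\epsilon$.

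\emph{Step 1: rounding.} By the coalgebraic rounding theorem, applied to the cocommutative Hopf algebra $M\wr\A$ acting on the module coalgebra $\Sym(M)$, we replace $F$ by a subcoalgebra $C\le\Sym(M)$ with the same almost-invariance up to a controlled loss of $\delta$. Passing to the degree filtration and its associated graded, which the $\A$-action preserves, we may further assume that $C$ is spanned by homogeneous elements. Since $C$ is a subcoalgebra, it is determined by its \emph{support}
\[V=\pi_1\bigl(\text{all iterated components of }\Delta(C)\bigr)\le M,\]
where $\pi_1$ projects to $\Sym^1(M)=M$. Then $C\le\Sym(V)$, and because $\Delta$ is $\A$-equivariant, $SC\le\Sym(V+SV)$.

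\emph{Step 2: transfer of almost-invariance to $V$.} We want $\dim(V+SV)/V$ to be small compared with $\dim V$. Suppose instead that $W=V+SV$ satisfies $\dim W/V=k\ge\epsilon\dim V$. Every element of $SC$ with a nonzero component outside $\Sym(V)$ contributes to $(C+SC)/C$. Using the coproduct, these components can be pushed down to linear terms in $W/V$, and counting them degree by degree should give a lower bound on $\dim(C+SC)/C$ of order a fixed fraction of $\dim C$, contradicting the choice of $\delta$. This is the same kind of binomial count as in the proof of Proposition~\ref{prop:locally-finite}, run in the reverse direction. The role of $m_0$ is to exclude the degenerate case $C=\Bbbk$, and more generally $C$ concentrated in degree $0$. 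Indeed, invariance under multiplication by $m_0$ forces $C$ to reach positive degree in proportion to its size, so $V\ne0$ and the support $V$ genuinely records $C$.

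\emph{Main obstacle.} I expect Step 2 to be the hard part. Almost-invariance of a subcoalgebra does not automatically descend to its primitive or linear part, because $\dim V$ could be tiny compared with $\dim C$. If so, a few new directions in $SV$ might cost little in $C+SC$, while still being large relative to $\dim V$. To handle this, I would first try the coalgebraic quotient theorem applied to the Hopf surjection $\Sym(M)\to\Bbbk\oplus M$, which kills $\Sym^{\ge2}$, or to its truncations $\Sym^{\le n}$. This should show that almost-invariance of $C$ survives in a quotient coalgebra that is essentially $\Bbbk\oplus V$. Failing that, I would filter $C$ by coradical degree and pigeonhole on the successive quotients $C_{(n)}/C_{(n-1)}$. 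Each of these embeds $\A$-equivariantly in $V^{\otimes n}$, and a ratio $\dim(V+SV)/\dim V\ge1+\epsilon$ would propagate multiplicatively to each layer, again contradicting almost-invariance of $C$.
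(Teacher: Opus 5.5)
\paragraph{The gap is Step~2.} It is not merely the hard part: it carries the whole content of the theorem, and in the form you state it, it is false. The linear support of a F\o lner subcoalgebra need not be F\o lner. Take $\A=\Bbbk[\mathbb{Z}]$ with generator $t$, and $M=\Bbbk e\oplus\Bbbk[\mathbb{Z}]$ with $te=e$; this $M$ is extensively amenable by Theorem~\ref{thm:amen=>reg}. Set $m_0=e$ and
\[
 C=\langle 1,e,\dots,e^n\rangle+\langle\delta_2,\delta_4,\dots,\delta_{2K}\rangle,\qquad K\approx\sqrt n .
\]
This $C$ is a homogeneous subcoalgebra with $\dim(C+tC+eC)-\dim C=2K+1=o(\dim C)$. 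Yet its support $V=\Bbbk e\oplus\langle\delta_{2k}\rangle$ has $\dim(V+tV)-\dim V=K\approx\dim V$. So no degree-by-degree count can bound $\dim(C+SC)/C$ below by a fixed fraction of $\dim C$. The defect of $V$ lives at scale $\dim V$, which may be $o(\dim C)$, exactly as you feared.

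\paragraph{Neither fallback repairs this.} First, $\Sym^{\ge2}(M)$ is not a coideal, since $\Delta(m_1m_2)$ contains $m_1\otimes m_2+m_2\otimes m_1$. Hence the projection $\Sym(M)\to\Bbbk\oplus M$ and its truncated analogues are not coalgebra maps, and the quotient theorem does not apply; the $\Sym^{\le n}(M)$ are subcoalgebras, not quotients. Second, in the example all the non-invariance sits in the first coradical layer, while the higher layers $\Bbbk e^j$ are $t$-invariant, so nothing propagates multiplicatively. Moreover, in characteristic $p$ even $C\le\Sym(V)$ fails: $\Bbbk\oplus\Bbbk m^p$ is a subcoalgebra with zero linear support.

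\paragraph{What is missing} is a mechanism that selects a good \emph{part} of the support. The paper does this in two moves and needs no rounding.

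First, take any F\o lner $E\le\Sym(M)$ for $F\le\A$ and multiplication by $m$. Apply the $\A$-linear universal derivation $d\colon\Sym(M)\to\Sym(M)\otimes M$, $d(m)=1\otimes m$, with the diagonal action, to $E'=E+mE$. The Leibniz rule $d(mu)=m\,d(u)+u\otimes m$ makes $u\mapsto(d(u),d(mu))$ injective, so $\dim E\le2\dim d(E')$. Meanwhile $d$ maps $FE'/E'$ onto $F\,d(E')/d(E')$. Hence $\Sym(M)\otimes M$ is amenable. This is the correct form of your ``role of $m_0$'', and it works in every characteristic.

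Second, Lemma~\ref{lem:tensor} extracts a F\o lner $V_0\le M$. It does not take the full tensor support. It takes the largest maximizer $C_t(Z)$ of $L\mapsto\dim(Z\cap(U\otimes L))-t\dim L$, at a level $t$ supplied by the coarea identity $\int_0^\infty\dim C_t(Z)\,dt=\dim Z$. The subcoalgebra flag of $F$, with its group-like leading terms, gives $FC_t(Z)\le C_t(FZ)$. In the example, with $Z=d(C+eC)$, one gets $C_t(Z)=\Bbbk e$ for all $2<t<(n+1)/2$, so the $\delta_{2k}$ are discarded. Without some such level-set selection, Step~2 cannot be completed.
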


\begin{mainthm}[= Theorem~\ref{thm:amen=>reg}]
  Every module over an amenable cocommutative Hopf algebra is extensively amenable. In particular, its regular module is extensively amenable.
\end{mainthm}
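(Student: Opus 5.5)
We need to show: if $\A$ is amenable, then for every $\A$-module $M$, the module $\Sym(M)$ over $M\wr\A$ is amenable. The plan is to prove that $M\wr\A$ is itself an amenable Hopf algebra, and then conclude that every nonzero module over it is amenable.

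\emph{Step 1: the extension.} $M\wr\A=\Sym(M)\smashp\A$ is a smash product. It is therefore a cleft extension of the Hopf subalgebra $\Sym(M)$ by the Hopf quotient $\A$, with the obvious cleaving map $a\mapsto 1\smashp a$, which is a coalgebra and algebra map. I will invoke the extension theorem of~\cite{bartholdi:hopfamen}*{Theorem~2.5}: amenability is preserved by cleft extensions, so $M\wr\A$ is amenable as soon as $\Sym(M)$ and $\A$ are. Here $\A$ is amenable by hypothesis.

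\emph{Step 2: $\Sym(M)$ is amenable.} $\Sym(M)$ is a commutative polynomial algebra, and it suffices to check that the regular module is amenable. Let $S\le\Sym(M)$ be finite, with all its variables lying in a finite subspace $V\le M$ of dimension $q$ and of degree at most $d$. Take $F=\Sym^{\le n}(V)$. Then $F+SF\le\Sym^{\le n+d}(V)$, and the ratio $\binom{n+d+q}{q}/\binom{n+q}{q}$ tends to $1$, exactly as in the proof of Proposition~\ref{prop:locally-finite} with trivial $\A$. If $M$ is infinite-dimensional, no further argument is needed, since $S$ only involves finitely many variables.

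\emph{Step 3: conclusion.} Since $M\wr\A$ is amenable, Corollary~\ref{cor:all-modules} gives that every nonzero $M\wr\A$-module is amenable. In particular $\Sym(M)\ne0$ is amenable, because it contains $\Bbbk$, so $M$ is extensively amenable. The ``in particular'' statement is the case $M={}_\A\A$.

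The main obstacle is verifying the hypotheses of the cleft-extension theorem exactly as formulated in~\cite{bartholdi:hopfamen}: which factor plays the role of the normal Hopf subalgebra, and whether cocommutativity or faithful flatness is required. For a smash product of cocommutative Hopf algebras these should hold, since $\Sym(M)$ is normal and $M\wr\A$ is free over it. If that theorem is stated only in a form that is awkward to apply, I would fall back on a direct Følner argument. Take $\A$-Følner sets $E\le\A$ for a finite $S\le M\wr\A$, with $S\subseteq\Sym^{\le d}(V)\smashp T$. Then use $F=\Sym^{\le n}(W)$, where $W=\sum_{t}tV$ is a finite space obtained by translating $V$ by a Følner set. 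The obstruction is that $\A$ need not map $\Sym^{\le n}(W)$ into itself, so the Følner sets in $M\wr\A$ must be taken of the form $\Sym^{\le n}(E'V)\smashp E$, with $n\to\infty$ chosen after $E$.
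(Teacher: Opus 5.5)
Your proposal is correct and follows the paper's proof: $\Sym(M)$ is amenable, the split cleft extension $\Sym(M)\to M\wr\A\to\A$ then makes $M\wr\A$ amenable by the cleft-extension theorem, and Corollary~\ref{cor:all-modules} passes this to the nonzero module $\Sym(M)$. The only difference is that you check amenability of $\Sym(M)$ directly, by the degree count of Proposition~\ref{prop:locally-finite}, where the paper cites the general theorem that commutative Hopf algebras are amenable; that substitution is perfectly valid, and your fallback F\o lner argument is not needed.
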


\begin{mainthm}[= Theorem~\ref{thm:exact}]\label{mainthm:exact}
  For every exact sequence of $\A$-modules
  \[0\longrightarrow M\longrightarrow N\longrightarrow P\longrightarrow0,
  \]
  $N$ is extensively amenable if and only if $M$ and $P$ are extensively amenable.
\end{mainthm}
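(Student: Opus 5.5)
The plan is to translate the exact sequence into a statement about Hopf algebras and their modules. Once that is done, amenability can be transported along Hopf quotients and along cleft extensions via \cite{bartholdi:hopfamen}*{Theorems~1.9, 2.2 and~2.5}. The remaining bookkeeping on finite subspaces will be handled by the coalgebraic rounding and quotient theorems announced in the abstract. Write $\iota\colon M\to N$ and $\pi\colon N\to P$. Over a field the sequence splits as vector spaces, $N\cong M\oplus P$, although not as $\A$-modules. Hence $\Sym(N)\cong\Sym(M)\otimes\Sym(P)$ as coalgebras, and as algebras as well. The map $\pi$ induces a surjective Hopf map $N\wr\A\twoheadrightarrow P\wr\A$ with kernel generated by $M$. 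The ideal $\Sym(N)M$ of $\Sym(N)$ is $\A$-stable, and $\Sym(N)/\Sym(N)M\cong\Sym(P)$ as $N\wr\A$-modules.

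\emph{The direction $N\Rightarrow P$.} Suppose $\Sym(N)$ is amenable over $N\wr\A$. The quotient module $\Sym(P)$ is pulled back from $P\wr\A$ along the Hopf quotient map. I would then apply the quotient theorem: the natural choice is the graded structure, grading by $M$-degree, where the image of a Følner space $F\le\Sym(N)$ in $\Sym(P)$ should be almost invariant. The rounding theorem should let me first replace $F$ by a subspace that is a direct sum of its homogeneous pieces with respect to the $M$-degree filtration $\Sym(N)=\bigoplus_k \Sym^k(M)\Sym(N)/\dots$. One of these pieces, viewed through $\gr$, is a Følner set for $\Sym(P)$ tensored with the finite multiplicity space $\Sym^k(M)$-part. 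Averaging over $k$ gives a Følner subspace of $\Sym(P)$.

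\emph{The direction $N\Rightarrow M$.} I would use the subalgebra $M\wr\A\le N\wr\A$ and view $\Sym(N)$ as an $M\wr\A$-module. It is a free $\Sym(M)$-module on a complement lifting $\Sym(P)$. Passing to $\gr$ along the $P$-degree filtration, $\gr\Sym(N)\cong\Sym(M)\otimes\Sym(P)$, and the action of $M\wr\A$ becomes the action on the first factor together with the $\A$-action on the second. A Følner subspace for $N\wr\A$ is in particular Følner for the smaller finite sets $S\le M\wr\A$. A rounding argument should then extract from it a slice $F\cap(\Sym(M)\otimes W)$, with $W$ a finite space of $\Sym(P)$, that remains Følner. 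This slice is isomorphic to a Følner space of $\Sym(M)^{\oplus\dim W}$. Since a finite direct sum of copies of a module is amenable iff the module is, $M$ is extensively amenable.

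\emph{The converse.} Given Følner spaces $E\le\Sym(M)$ for $M\wr\A$ and $E'\le\Sym(P)$ for $P\wr\A$, choose a linear section $\sigma\colon P\to N$ and extend it to $\Sym(P)\to\Sym(N)$. I would take $F=E\cdot\sigma(E')$, so that $\dim F=\dim E\dim E'$. The action of $n\in N$ on $F$ splits into a $\sigma(\pi n)$ part and an $M$ part. For $a\in\A$, we have $a\sigma(p)-\sigma(ap)\in\Sym(N)M$. The error terms therefore lie in $\Sym(M)^{+}\sigma(\Sym(P))$, but they are multiplied by elements of bounded degree. To absorb them I would choose $E$ Følner for a finite set $S_M$ that depends on $E'$ and on the correction terms $a\sigma(p)-\sigma(ap)$ for $p\in E'$. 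This is the standard order of quantifiers for extensions: choose $E'$ first, then $E$. This choice is the main obstacle. The corrections $a\sigma(p)-\sigma(ap)$ involve polynomials in $\Sym(P)$ of unbounded degree times elements of $M$, so one must check that they land in $E\cdot\sigma(\Sym^{\le\deg}(P))$ up to a small error. This is precisely the cleft-extension mechanism of \cite{bartholdi:hopfamen}*{Theorem~2.5}. Indeed, $N\wr\A$ is a cleft extension of $P\wr\A$ by $\Sym(M)$, with cleaving map induced by $\sigma$. I would first try to invoke that theorem in its module form rather than redo the estimate by hand.
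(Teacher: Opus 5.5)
Your initial descent steps are fine: restricting from $N\wr\A$ to $M\wr\A$, then passing to the associated graded along $P$-degree. The extraction step fails, because $\A$ acts on $\gr\Sym(N)\cong\Sym(M)\otimes\Sym(P)$ \emph{diagonally}, $a(u\otimes v)=\sum a_{(1)}u\otimes a_{(2)}v$.
\begin{itemize}
\item A finite $W\le\Sym(P)$ is not $\A$-stable, so $\Sym(M)\otimes W$ is not an $M\wr\A$-submodule.
\item Even for $\A$-stable $W$, the module $\Sym(M)\otimes W$ is not isomorphic to $\Sym(M)^{\oplus\dim W}$.
\item Nothing makes the slice $F\cap(\Sym(M)\otimes W)$ F{\o}lner; rounding produces subcoalgebras, not slices.
\end{itemize}
You need one of two tools here. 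The first is Lemma~\ref{lem:tensor}, which slices in the \emph{other} factor: it takes largest maximizers $L\le\Sym(M)$ of $\dim(E\cap(L\otimes\Sym(P)))-t\dim L$, controlled by the triangular coproduct relation and a coarea identity. The second, used in the paper, is the quotient theorem \cite{bartholdi:hopfamen}*{Theorem~1.9}, applied to the surjective module-coalgebra map $\id\otimes\varepsilon\colon\Sym(M)\otimes\Sym(P)\to\Sym(M)$ induced by $M\oplus P\to M$. The paper passes to $\gr$ along $0\le M\le N$, concludes that $M\oplus P$ is extensively amenable, and projects to both summands. For $P$ you need no grading at all: $\Sym(N)\to\Sym(P)$ is already a surjective $N\wr\A$-module-coalgebra map, so a direct appeal to the quotient theorem suffices. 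Your ``homogeneous pieces, average over $k$'' sketch has the same diagonal-action defect and should be dropped.

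For the converse, your candidate $E\cdot\sigma(E')$, with $E'$ chosen before $E$, has exactly the right shape. With $\B=N\wr\A$ and $\K=M\wr\A$, it is the space $C\otimes E$ of Lemma~\ref{lem:induction} under $\B\otimes_\K\Sym(M)\cong\Sym(N)$. But the obstacle you flag is genuine and you leave it unresolved. Write $a\sigma(x_1\cdots x_k)=\sum\prod_i a_{(i)}\sigma(x_i)$, with $a_{(i)}\sigma(x_i)=\sigma(a_{(i)}x_i)+(\text{element of }M)$. Then the $\Sym(P)$-components of the correction terms come from sub-products $\prod_{i\notin I}x_i$, that is, from left coproduct components of $p$. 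For an arbitrary F{\o}lner space $E'$ these span an uncontrolled space, and no choice of $E$ can repair a $\Sym(P)$-component lying outside $E'$.

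The missing idea is to take $E'=C$ to be a \emph{subcoalgebra} with $\dim FC<(1+\delta)\dim C$, which the rounding theorem \cite{bartholdi:hopfamen}*{Theorem~1.1} supplies. Then $\rho(F\sigma(C))\le FC\otimes\B$. Colinearity of $\Theta\colon\Sym(P)\otimes\K\to\B$ gives $F\sigma(C)\le\sigma(FC)\,T$ for some finite $T\le\K$, and only then do you choose $E\le\Sym(M)$ F{\o}lner for $T$.

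Your fallback does not work as stated. In $\Sym(M)\to N\wr\A\to P\wr\A$, the quotient $P\wr\A$ has $\A$ as a Hopf quotient. So it is amenable only when $\A$ is, in which case Theorem~\ref{thm:amen=>reg} already settles everything. Moreover, the cleft-extension theorem concerns the regular module. The relevant induction is from the non-normal subalgebra $\K$, where $\B/\B\ker\varepsilon_\K\cong\Sym(P)$ is only a module coalgebra. Lemma~\ref{lem:induction} is precisely the module-level statement for that situation.
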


The following proves, in the case of extensive amenability, the corresponding result~\cite{bartholdi:aa1}*{Theorem~1.4} for usual amenability:
\begin{mainthm}[= Theorem~\ref{thm:group}]
  For every field \(\Bbbk\) and every \(G\)-set \(X\), the permutation
  \(\Bbbk[G]\)-module \(\Bbbk[X]\) is extensively amenable if and only if
  \(G\curvearrowright X\) is extensively amenable.
\end{mainthm}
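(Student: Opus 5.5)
We have to prove that the permutation module \(\Bbbk[X]\) is extensively amenable (in the sense of the Definition above) if and only if \(G\curvearrowright X\) is extensively amenable in the sense of Juschenko--Matte Bon--Monod--de la Salle. Recall that \(G\curvearrowright X\) is extensively amenable if and only if the wreath action \(\Pf(X)\rtimes G\curvearrowright\Pf(X)\) is amenable; here \(\Pf(X)\) is the group of finite subsets of \(X\) under symmetric difference. The plan is to identify both sides with amenability of a single action, and then invoke the known comparison between amenability of a group action and of its permutation module~\cite{bartholdi:aa1}*{Theorem~1.4}.

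\textbf{Structure of the Hopf algebra.} The symmetric algebra \(\Sym(\Bbbk[X])=\Bbbk[t_x:x\in X]\) is the polynomial ring on \(X\). The group \(G\) permutes the variables, and \(M\wr\Bbbk[G]=\Bbbk[t_x]\rtimes G\).

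\textbf{Step 1: filtration.} Filter \(\Sym(\Bbbk[X])\) by the monomial support. Its associated graded object is spanned by monomials \(t^\alpha\) with \(\alpha\in\mathbb N^{(X)}\). Multiplication by a variable \(t_x\) sends \(t^\alpha\) to \(t^{\alpha+e_x}\), and \(G\) permutes monomials. The monomial basis is therefore a combinatorial basis on which \(S\subseteq\Bbbk[t_x]\rtimes G\) acts by ``partial permutations''. Amenability of the module \(\Sym(\Bbbk[X])\) then reduces to amenability, in the Følner sense, of the monoid action of \(\mathbb N^{(X)}\rtimes G\) on \(\mathbb N^{(X)}\).

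For this reduction I would use the rounding theorem from~\cite{bartholdi:hopfamen}. It allows replacing an arbitrary Følner subspace \(F\) by a subspace spanned by monomials, at the cost of an \(\epsilon\) in the expansion ratio. This works because the monomial basis is a basis of group-likes-times-primitives compatible with the coalgebra structure. Conversely, finite sets of monomials with small boundary give Følner subspaces.

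\textbf{Step 2: compare \(\mathbb N^{(X)}\) with \(\Pf(X)\).} Show that \(\mathbb N^{(X)}\rtimes G\curvearrowright\mathbb N^{(X)}\) is Følner-amenable if and only if \(\Pf(X)\rtimes G\curvearrowright\Pf(X)\) is amenable. The latter is equivalent, by~\cite{juschenko-mattebon-monod-delasalle:extensive}, to extensive amenability; their characterisation can equally be phrased with \((\mathbb Z/2)^{(X)}\) replaced by \(A^{(X)}\) for any nontrivial amenable group \(A\), for instance \(\mathbb Z^{(X)}\).

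In one direction, \(\mathbb N^{(X)}\) is a large piece of \(\mathbb Z^{(X)}\). A Følner set in \(\mathbb N^{(X)}\) for the generators \(t_x\) (\(x\in S_0\)) and \(g\in S_1\) yields a Følner set for \(\mathbb Z^{(X)}\rtimes G\), as follows. The translation \(\alpha\mapsto\alpha+e_x\) is injective, so small boundary under \(+e_x\) also gives small boundary under \(-e_x\) by counting.

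In the other direction, start from Følner sets \(F\subseteq\mathbb Z^{(X)}\). Replace them by products \(F\times\{\text{boxes }[N,2N]^{\mathrm{supp}}\}\): shift into positive coordinates on the finite support, keeping boundaries small, in the style of Proposition~\ref{prop:locally-finite}.

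\textbf{Step 3: independence of the field.} The combinatorial reduction in Step~1 should be characteristic-free, since only the monomial basis is used. The main obstacle is precisely Step~1: a Følner subspace need not be spanned by monomials, and in positive characteristic the coalgebra structure of \(\Sym\) is not a divided-power structure, so the rounding argument must be applied carefully. I would first try the coalgebraic rounding theorem directly on the support filtration. Failing that, I would use the passage to \(\gr\) together with the quotient theorem.
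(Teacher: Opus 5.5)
There is a genuine gap: each implication of the theorem passes through a step you have not established.

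For ``\(\Bbbk[X]\) extensively amenable \(\Rightarrow\) \(G\curvearrowright X\) extensively amenable'' you need the hard half of Step~1: producing a F\o lner set of \emph{monomials} from a F\o lner subspace of \(\Sym(\Bbbk[X])\). The coalgebraic rounding theorem does not do this. It yields F\o lner \emph{subcoalgebras}, and these need not be spanned by monomials (e.g.\ \(\Bbbk1+\Bbbk(x+y)\)). Your justification that monomials are ``group-likes times primitives'' is also off, since the connected coalgebra \(\Sym(\Bbbk[X])\) has \(1\) as its only group-like. Passing to an associated graded does not help either. For a multiplicative monomial order one has \(\operatorname{lm}(t_xw)=t_x\operatorname{lm}(w)\), so the leading monomials of \(W\) have small \(+e_x\)-boundary. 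But a monomial order is not \(G\)-invariant in general (any element swapping two points rules it out), so this filtration is incompatible with \(\Sym(\Bbbk[X])\smashp\Bbbk[G]\) and the \(g\)-boundaries are lost. What is missing is a Gromov-type ordering argument for the monoid \(\mathbb N^{(X)}\rtimes G\) acting by injections. You flag this yourself in Step~3, but the fallbacks you list do not supply it.

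The paper avoids monoids entirely. It maps \(\Sym(\Bbbk[X])\to R=\Bbbk[\mathbb Z^{(X)}]\) by \(x\mapsto z_x=u_x-1\) and notes \(\widetilde E+u_x\widetilde E=\widetilde E+z_x\widetilde E\). It controls \(u_x^{-1}\) and words via \(b_W(a^{-1})=b_W(a)\) and \(b_W(ab)\le b_W(a)+b_W(b)\). This gives linear amenability of \(R\), the permutation module of the \emph{group} \(L\rtimes G\) acting on \(L\), where the equivalence with amenability of the action is already available.

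For the converse implication you need the second half of Step~2, and the shifting construction fails as stated. Let \(Y\) be the union of the supports of \(F\), and translate into positive coordinates by boxes over \(Y\). Then every element of the new set \(F'\) vanishes off \(Y\) and is positive on \(Y\). So for \(g\) with \(gY\ne Y\) and \(y\in gY\setminus Y\), every element of \(gF'\) is positive at \(y\) while every element of \(F'\) vanishes there, whence \(gF'\cap F'=\varnothing\). Proposition~\ref{prop:locally-finite} works only because there the supports lie in a finite \emph{invariant} subspace.

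The paper gets this direction linearly instead. F\o lner sets for the affine action span F\o lner subspaces of \(R\) over \(R\smashp\Bbbk[G]\simeq\Bbbk[L\rtimes G]\). Since \(G\) preserves the augmentation ideal \(\varpi\), \(\bigcap_n\varpi^n=0\), and \(\gr_\varpi R\simeq\Sym(\Bbbk[X])\) equivariantly over any field, the leading-symbol argument transfers amenability to \(\Sym(\Bbbk[X])\). A combinatorial repair of your route is possible, e.g.\ averaging uniform boxes \([0,N]^B\) against a JMMS mean on \(\Pf(X)\), but it is not what you sketch.

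The sound parts of your plan are the easy half of Step~1 (monomial F\o lner sets span F\o lner subspaces) and the first half of Step~2 (in \(\mathbb Z^{(X)}\), \(|(F-e_x)\setminus F|=|(F+e_x)\setminus F|\)). Field-independence requires no separate argument.
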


In fact, there is quite a broad flexibility in the definition of extensive amenability of $G$-sets. One may replace the functor $(\Z/2)^{(X)}$ by $B^{(X)}$ for any nontrivial amenable group $B$, or by $\operatorname{Sym}_{\mathrm{fin}}(X)$, the group of finitely supported permutations of $X$, see~\cite{juschenko-mattebon-monod-delasalle:extensive}*{Theorem~1.3 and Examples~3.1--3.2}.

Other functorial constructions $\mathbf{VectorSpace}\to\mathbf{AmenableAlgebra}$ may be used in place of the symmetric algebra, provided they carry a compatible induced $\A$-action and Hopf structure. I have not gone into the general theory which is beyond my non-AI brain, but contented myself with the exterior algebra, interpreted as a Hopf superalgebra, and call $M$ \emph{$\wedge$-extensively amenable} when $\bigwedge M$ is amenable under multiplication and the induced $\A$-action. We regard $\A$ as purely even and $M$ as purely odd in the Hopf superalgebra $\bigwedge M\smashp\A$.

I have managed to prove that all the above main results remain valid for $\wedge$-extensive amenability, but \emph{not} that $M$ is extensively amenable if and only if it is $\wedge$-extensively amenable, in contrast with the group case.

\subsection*{Acknowledgments to humans and AI}
I am grateful to Mikael de la Salle for numerous stimulating discussions on this topic. My definition of extensive amenability for algebras is directly inspired from the group case.

ChatGPT 5.6 Sol was used to assist with drafting, editing, checking parts of this manuscript, and exploratory formal verification. All mathematical content was reviewed, checked, and substantially revised by the author, who is responsible for the final text.

The main results have been formalized in \textsf{Lean/Mathlib} and are available at\\
\centerline{\url{https://github.com/laurentbartholdi/hopfamenability}}

\section{Two transfer lemmas}
In equations ``$\dim((F+SF)/F)<\epsilon\dim F$'', we refer to $S$ as a \emph{testing space} and $F$ as a \emph{F\o lner space}.
By the fundamental theorem of coalgebras, finite testing spaces
in a Hopf algebra may be enlarged to finite subcoalgebras containing
\(1\). We use this reduction below.

The following is the tensor-space form of the argument proving
\cite{bartholdi:hopfamen}*{Theorem~1.9}. We record the slight strengthening needed
below.

\begin{lem}[Tensor-factor extraction]\label{lem:tensor}
Let \(\HH\) be a cocommutative Hopf algebra, let $U,V$ be left $\HH$-modules, and give \(U\otimes V\) the
diagonal \(\HH\)-action. If \(F\le \HH\) is a finite subcoalgebra
containing \(1\), then for every nonzero finite \(E\le U\otimes V\)
there is a nonzero finite \(V_0\le V\) such that
\[
 \frac{\dim FV_0}{\dim V_0}\le \frac{\dim FE}{\dim E}.
\]
Consequently, if \(U\otimes V\) contains an amenable \(\HH\)-submodule,
then \(V\) is amenable.
\end{lem}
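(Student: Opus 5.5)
The \emph{consequently} part is the easy half, so I would dispose of it first. Suppose \(W\le U\otimes V\) is an amenable \(\HH\)-submodule and let \(S\le\HH\) be a finite testing space. Enlarge \(S\) to a finite subcoalgebra \(F\ni 1\), as explained at the start of this section. For \(\epsilon>0\) amenability of \(W\) gives a nonzero finite \(E\le W\) with \(\dim FE<(1+\epsilon)\dim E\). The inequality then produces a nonzero finite \(V_0\le V\) with \(\dim FV_0<(1+\epsilon)\dim V_0\). Since \(1\in F\), we have \(V_0+SV_0\le FV_0\), so \(\dim((V_0+SV_0)/V_0)<\epsilon\dim V_0\). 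Hence \(V\) is amenable.

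For the inequality itself, I would reduce to a filtration/slicing count. Fix a finite \(U_1\le U\) with \(E\le U_1\otimes V\), and set \(W=FU_1\), which is finite. Choose an ordered basis \(w_1,\dots,w_m\) of \(W\). For a finite subspace \(X\le W\otimes V\) and each \(j\), define the leading-coefficient space \(L_j(X)\le V\): it consists of those \(v\) for which \(X\) contains an element \(w_j\otimes v+(\text{terms in }w_{<j}\otimes V)\). Gaussian elimination gives
\[
  \dim X=\sum_{j=1}^m\dim L_j(X),
\]
so in particular \(\dim E=\sum_j\dim L_j(E)\). The key claim I would aim for is the superadditivity
\[
  \dim FE\;\ge\;\sum_j \dim F\,L_j(E).
\]
Granting it, pigeonhole finishes the proof. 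Since \(\sum_j\dim FL_j(E)\le\frac{\dim FE}{\dim E}\sum_j\dim L_j(E)\), some \(j\) with \(L_j(E)\ne0\) satisfies \(\dim FL_j(E)/\dim L_j(E)\le\dim FE/\dim E\). Take \(V_0=L_j(E)\).

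The main obstacle is that superadditivity. The action \(h(u\otimes v)=\sum h_{(1)}u\otimes h_{(2)}v\) does not preserve the filtration \(w_{\le j}\otimes V\), so leading terms of \(he\) are not simply \(w_j\otimes hv\). My first attempt would be an untwisting trick that uses cocommutativity and the antipode. For \(e=u\otimes v+(\text{lower})\in E\) and \(h\in F\), I would consider elements of the form
\[
  \sum h_{(1)}\cdot\bigl((S(h_{(2)})\otimes 1)\,\cdot\,(\ldots)\bigr),
\]
or equivalently apply slice functionals \(\varphi\otimes\id\) with \(\varphi\) ranging over an \(F\)-stable subspace of \(W^*\). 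The goal is to show that the slices of \(FE\) contain \(F\) applied to the slices of \(E\), and to do so compatibly with a flag.

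If a fixed flag does not work, I would instead choose the flag adapted to \(FE\): filter \(W\otimes V\) so that \(FE\) itself is in echelon form. I would then compare \(L_j(FE)\) with \(F L_j(E)\), using that \(\Delta(F)\le F\otimes F\) and \(\varepsilon(h_{(1)})h_{(2)}=h\) to isolate the component \(u\otimes hv\) modulo lower terms. This is exactly where the proof of \cite{bartholdi:hopfamen}*{Theorem~1.9} should be transplanted.
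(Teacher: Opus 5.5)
Your reduction of the ``consequently'' clause to the inequality is fine, and so are the echelon identity \(\dim X=\sum_j\dim L_j(X)\) and the final averaging step. The gap is the superadditivity \(\dim FE\ge\sum_j\dim F\,L_j(E)\). It carries the whole content of the lemma, you leave it unproved, and for a fixed basis flag of \(W=FU_1\) it is false. Take \(\HH=\Bbbk[\mathbb Z]\) with generator \(s\), \(U=V=\Bbbk[\mathbb Z]\) regular with basis \((\delta_i)_{i\in\mathbb Z}\), \(F=\Bbbk 1+\Bbbk s\), and \(E=\langle\delta_i\otimes\delta_i:0\le i<n\rangle\) with \(n\ge2\). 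Then \(FE=\langle\delta_i\otimes\delta_i:0\le i\le n\rangle\) has dimension \(n+1\). For \(w_j=\delta_{j-1}\), or any ordering of the \(\delta_i\), every nonzero slice \(L_j(E)\) is a line \(\Bbbk\delta_i\) with \(i<n\), so \(\sum_j\dim F\,L_j(E)=2n>n+1\). Worse, every nonzero slice has expansion ratio \(2\), whereas the lemma asks for ratio at most \((n+1)/n\), attained by \(V_0=\langle\delta_0,\dots,\delta_{n-1}\rangle\). So no averaging over these slices can succeed. The reason is that \(F\) also acts on the first factor and mixes the \(w_j\), so a flag on \(W\) chosen independently of \(E\) and \(F\) is not compatible with the action. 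Your fallbacks do not fix this. Any flag gives an echelon decomposition of \(FE\), so ``a flag adapted to \(FE\)'' singles nothing out. The untwisting sketch gives no inclusion of \(F\)-translates of pieces of \(E\) into pieces of \(FE\) that a dimension identity could count.

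What is missing is a filtration of \(V\) defined intrinsically from the subspace, which is what the paper uses. For \(Z\in\{E,FE\}\), the function \(r_Z(L)=\dim(Z\cap(U\otimes L))\) is monotone and supermodular on subspaces \(L\le V\), and \(C_t(Z)\) is the largest maximizer of \(r_Z(L)-t\dim L\). The key step is \(F\,C_t(E)\le C_t(FE)\). It is proved along a complete subcoalgebra flag \(0=F_0<\dots<F_s=F\), split over a finite extension and descended afterwards. For \(a\in F_i\setminus F_{i-1}\) there is a group-like \(g\) with \(\Delta(a)-g\otimes a\in F_i\otimes F_{i-1}\). Given \(F_{i-1}C_t(E)\le C_t(FE)\), the diagonal action of \(a\) on \(U\otimes C_t(E)\) is therefore \(g_U\otimes\bar a\) modulo \(U\otimes C_t(FE)\). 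Since \(g_U\) is invertible, its kernel is \(U\otimes\ker\bar a\). Maximality of \(C_t(E)\) then makes \(C_t(FE)+aC_t(E)\) a maximizer for \(FE\), and largestness gives \(aC_t(E)\le C_t(FE)\). The coarea identity \(\int_0^\infty\dim C_t(Z)\,dt=\dim Z\) now yields \(\int_0^\infty\dim F\,C_t(E)\,dt\le\dim FE\). This is the correct form of your superadditivity, with an integral over \(t\) replacing the sum over \(j\). Averaging gives \(V_0=C_t(E)\) for a suitable \(t\). In the example above, \(C_t(E)=\langle\delta_0,\dots,\delta_{n-1}\rangle\) for \(0<t\le1\), which is exactly the optimal \(V_0\).
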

\begin{proof}
  Put \(E^+=FE\) and choose a finite subspace \(W\le V\) containing all second tensor coefficients of \(E\) and \(E^+\).  For \(Z=E,E^+\) put \(r_Z(L)=\dim(Z\cap(U\otimes L))\). The functions \(r_Z\) are monotone and supermodular on the subspace lattice of $W$. For \(t>0\), let \(C_t(Z)\)
be the largest maximizer of \(r_Z(L)-t\dim L\), which exists by
\cite{bartholdi:hopfamen}*{Lemma~1.2}. These are also maximizers over
all finite subspaces of \(V\): replacing \(L\) by \(L\cap W\) leaves its
rank unchanged and cannot increase its dimension.

First suppose that \(F\) has a complete subcoalgebra flag
\(0=F_0<\cdots<F_s=F\). We claim that \(F C_t(E)\le C_t(E^+)\).
Write \(C=C_t(E)\), \(C^+=C_t(E^+)\), and suppose inductively that
\(F_{i-1}C\le C^+\). For \(a\in F_i\setminus F_{i-1}\), there is
a group-like \(g\in F_i\), by the codimension-one argument in
\cite{bartholdi:hopfamen}*{Proposition~1.8}, with
\(\Delta(a)-g\otimes a\in F_i\otimes F_{i-1}\). Set \(K=\{c\in C:ac\in C^+\}\), let \(q\colon V\to V/C^+\) be the quotient map, and put \(\bar a\colon C\to V/C^+\), \(\bar a(c)=q(ac)\).
Write \(\delta_a\) for the diagonal action of \(a\) on \(U\otimes V\),
and \(g_U\) for the action of \(g\) on \(U\).
The relation for \(\Delta(a)\) and the inductive inclusion
\(F_{i-1}C\le C^+\) give
\[
 (\id_U\otimes q)\delta_a(z)=(g_U\otimes\bar a)(z)
 \qquad(z\in U\otimes C).
\]
Since \(g_U\) is invertible with inverse \(S(g)_U\)
and \(\ker\bar a=K\), exactness of the tensor product gives
\[
 \ker(g_U\otimes\bar a)=U\otimes K.
\]
Moreover, \(\delta_a(E)\le E^+\), because \(a\in F\), and the same
triangular relation gives
\(\delta_a(U\otimes C)\le U\otimes(C^++aC)\).
Thus the restriction of \(\delta_a\) to \(E\cap(U\otimes C)\), followed
by quotienting by \(E^+\cap(U\otimes C^+)\), has kernel exactly
\(E\cap(U\otimes K)\). It therefore induces an injection
\[
 \frac{E\cap(U\otimes C)}{E\cap(U\otimes K)}\to
 \frac{E^+\cap(U\otimes(C^++aC))}{E^+\cap(U\otimes C^+)}.
\]
Maximality of \(C\) bounds the dimension on the left below by
\(t(\dim C-\dim K)\); rank-nullity identifies this with
\(t(\dim(C^++aC)-\dim C^+)\). Hence \(C^++aC\) is also a maximizer
for \(E^+\), and the choice of the largest maximizer gives \(aC\le C^+\).

The density/coarea identity \cite{bartholdi:hopfamen}*{Lemma~1.2} now gives
\[
 \int_0^\infty\dim C_t(E)\,dt=\dim E,\qquad
 \int_0^\infty\dim F C_t(E)\,dt\le\dim E^+.
\]
Some nonzero \(C_t(E)\) has the asserted ratio.

For the general case, split a subcoalgebra flag after a finite field extension and descend the ratio, using \cite{bartholdi:hopfamen}*{Lemmas~1.6 and~1.7}.
\end{proof}

\begin{cor}\label{cor:all-modules}
Let \(\A\) be a cocommutative Hopf algebra. The regular module
\({}_\A\A\) is amenable if and only if every nonzero left \(\A\)-module
is amenable.
\end{cor}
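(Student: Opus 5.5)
The plan is to prove both directions through Lemma~\ref{lem:tensor}, taking the tensor factor \(U\) to be the trivial module or the regular module itself, and using that \(\Delta\) makes \(\A\otimes M\) into an \(\A\)-module.

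\emph{``If'' direction.} Suppose every nonzero left \(\A\)-module is amenable. The regular module \({}_\A\A\) is nonzero, since it contains \(1\), so it is amenable. Nothing more is needed here.

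\emph{``Only if'' direction.} Suppose \({}_\A\A\) is amenable and let \(M\) be a nonzero left \(\A\)-module. Apply Lemma~\ref{lem:tensor} with \(\HH=\A\), \(U={}_\A\A\) and \(V=M\), where \(U\otimes V\) carries the diagonal action. By the final clause of that lemma, it suffices to exhibit an amenable \(\A\)-submodule of \(\A\otimes M\). The natural candidate is \(\A\otimes M\) itself. Fix a nonzero \(m\in M\) and consider the map
\[\phi\colon \A\to \A\otimes M,\qquad \phi(a)=\sum a_{(1)}\otimes a_{(2)}m.\]
It is \(\A\)-linear for the diagonal action, because \(\Delta\) is an algebra map. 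It is injective, since \((\id\otimes\varepsilon')\) is unavailable in general but \((\id\otimes\varepsilon)\) cannot be applied to \(M\). Instead, injectivity follows from the Hopf isomorphism \(\A\otimes M\to\A\otimes M\), \(a\otimes m\mapsto\sum a_{(1)}\otimes a_{(2)}m\), whose inverse is \(a\otimes m\mapsto \sum a_{(1)}\otimes S(a_{(2)})m\). This isomorphism intertwines the left-regular action on the first factor, with \(M\) trivial, and the diagonal action. Via this untwisting, \(\A\otimes M\) with the diagonal action is isomorphic to \({}_\A\A\otimes M_{\mathrm{triv}}\), which is a direct sum of \(\dim M\ge1\) copies of \({}_\A\A\).

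It remains to see that a nonzero direct sum of copies of an amenable module is amenable. This is immediate: it contains the amenable module \({}_\A\A\otimes\Bbbk m\cong{}_\A\A\) as a submodule, and a Følner space of that summand is a Følner space of the sum for the same testing space. So \(\A\otimes M\) contains an amenable \(\A\)-submodule, and Lemma~\ref{lem:tensor} yields that \(M\) is amenable.

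The main point to verify is the untwisting isomorphism. It needs the antipode, and it needs the check that it carries \(b\cdot(a\otimes m)=ba\otimes m\) to the diagonal action \(\sum b_{(1)}a_{(1)}\otimes b_{(2)}a_{(2)}m\). This is a one-line Sweedler computation using multiplicativity of \(\Delta\) and \(\sum a_{(1)}S(a_{(2)})=\varepsilon(a)\); cocommutativity is not even needed there. Everything else is formal from Lemma~\ref{lem:tensor}.
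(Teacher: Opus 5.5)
Your proof is correct and is essentially the paper's argument: the untwisting map \(T(a\otimes m)=\sum a_{(1)}\otimes a_{(2)}m\), inverted using the antipode, exhibits \(T(\A\otimes\Bbbk m)\cong{}_\A\A\) as an amenable submodule of the diagonal module \(\A\otimes M\), and Lemma~\ref{lem:tensor} then makes \(M\) amenable. You should simply delete the garbled sentence about \(\varepsilon'\): injectivity of \(\phi=T(\,\cdot\otimes m)\) follows at once from bijectivity of \(T\) and \(m\neq0\).
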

\begin{proof}
The reverse implication follows by taking the regular module.
Conversely, suppose that \(\A\) is amenable and let \(M\ne0\).
The linear isomorphism
\[
 T\colon\A\otimes M\to\A\otimes M,\qquad
 a\otimes m\mapsto\sum a_{(1)}\otimes a_{(2)}m,
\]
with inverse \(a\otimes m\mapsto\sum a_{(1)}\otimes S(a_{(2)})m\),
intertwines the action on the first factor with the diagonal action.
For any fixed \(0\ne m\in M\), the diagonal module therefore contains
the amenable submodule \(T(\A\otimes\Bbbk m)\simeq{}_\A\A\).
Lemma~\ref{lem:tensor} makes \(M\) amenable.
\end{proof}

For cocommutative Hopf algebras \(\K\le\B\), let \(Q=\B/\B\ker\varepsilon_\K\) be the
quotient left \(\B\)-module coalgebra, with quotient map \(q\).
The left \(Q\)-coaction on \(\B\) is \(\rho=(q\otimes\id)\Delta_\B\colon\B\to Q\otimes\B\).

\begin{lem}[Normal-basis induction]\label{lem:induction}
Let \(\K\le\B\) be cocommutative Hopf algebras. Suppose there is
a linear section \(\sigma\colon Q\to\B\) of \(q\) for which
\[
 \Theta\colon Q\otimes \K\to \B,\qquad c\otimes k\mapsto\sigma(c)k
\]
is a right \(\K\)-module and left \(Q\)-comodule isomorphism, where the
source coaction is \(\Delta_Q\otimes\id\).
If \(Q\) is amenable over \(\B\) and \(V\) is amenable over \(\K\),
then \(\B\otimes_\K V\) is amenable over \(\B\).
\end{lem}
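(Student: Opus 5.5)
The plan is to transport the problem along \(\Theta\). Since \(\Theta\colon Q\otimes\K\to\B\) is a right \(\K\)-module isomorphism, it induces a linear isomorphism
\[
 \bar\Theta\colon Q\otimes V\cong Q\otimes\K\otimes_\K V\to\B\otimes_\K V,\qquad c\otimes v\mapsto\sigma(c)\otimes v .
\]
I will look for F\o lner spaces of product form \(\bar\Theta(E\otimes V_0)\), where \(E\le Q\) is F\o lner for the \(\B\)-action on \(Q\) and \(V_0\le V\) is F\o lner for \(\K\). By the reduction at the start of this section, fix a finite subcoalgebra \(F\le\B\) containing \(1\). Since \(1\in F\), the F\o lner condition is equivalent to \(\dim(FX)\le(1+\epsilon)\dim X\). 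First choose a nonzero finite \(E\le Q\) with \(\dim FE\le(1+\epsilon)\dim E\).

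The key step is to control where \(F\sigma(E)\) lands under \(\Theta^{-1}\). Since \(F\sigma(E)\) is finite, there is a finite subspace \(S\le\K\) with \(\Theta^{-1}(F\sigma(E))\le Q\otimes S\). I want the sharper inclusion
\[
 \Theta^{-1}(F\sigma(E))\le FE'\otimes S,
\]
where \(E'\) differs from \(E\) in a controlled way. For this I would use the comodule hypothesis: \(\rho(b x)=\sum b_{(1)}q(x_{(1)})\otimes b_{(2)}x_{(2)}\), because \(\rho=(q\otimes\id)\Delta_\B\) and \(q\) is \(\B\)-linear. Under \(\Theta\), \(\rho\) corresponds to \(\Delta_Q\otimes\id\). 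Applying \(\id\otimes\varepsilon\) recovers the \(Q\otimes\K\) coordinates, so the \(Q\)-coordinates of \(\Theta^{-1}(b\sigma(c))\) lie in the span of the elements \(b_{(1)}c'\), with \(c'\) running over the first legs of \(\Delta_Q(c)\).

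To keep \(E'\) equal to \(E\), I would arrange that \(E\) is a left subcomodule (subcoalgebra) of \(Q\). Two routes seem available. One is to rerun the maximizer argument of Lemma~\ref{lem:tensor}, which produces F\o lner spaces \(C_t\) adapted to coalgebraic structure. The other is to note that \(q(\sigma(c)k)=c\,\varepsilon(k)\), because \(q\) kills \(\B\ker\varepsilon_\K\); this identifies the \(Q\)-coordinate after applying \(\id\otimes\varepsilon_\K\), and a counit argument then spreads this to the whole \(Q\)-leg. I expect this step, controlling the \(Q\)-leg of \(\Theta^{-1}(F\sigma(E))\) by \(FE\) without losing the ratio, to be the main obstacle.

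Granting it, choose, now that \(S\) is fixed, a nonzero finite \(V_0\le V\) with \(\dim SV_0\le(1+\epsilon)\dim V_0\). This uses amenability of \(V\) over \(\K\), enlarging \(S\) to a subcoalgebra containing \(1\) if needed. Then for \(b\in F\), \(c\in E\), \(v\in V_0\), writing \(\Theta^{-1}(b\sigma(c))=\sum c_i\otimes k_i\) with \(c_i\in FE\) and \(k_i\in S\), we get
\[
 b\cdot\bar\Theta(c\otimes v)=\sum\sigma(c_i)\otimes k_iv\in\bar\Theta(FE\otimes SV_0).
\]
Therefore
\[
 \dim F\,\bar\Theta(E\otimes V_0)\le\dim FE\cdot\dim SV_0\le(1+\epsilon)^2\dim E\dim V_0=(1+\epsilon)^2\dim\bar\Theta(E\otimes V_0),
\]
using injectivity of \(\bar\Theta\). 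As \(\epsilon\) is arbitrary, \(\B\otimes_\K V\) is amenable over \(\B\).
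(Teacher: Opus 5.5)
Your architecture matches the paper's proof: transport along $\Theta$, use colinearity to put $F\sigma(C)$ inside $\sigma(FC)\,T$ for a finite $T\le\K$, then take a product space $C\otimes V_0$ with $V_0$ F\o lner for $T$. You also correctly see that $E$ must be a subcoalgebra. But the step you ``grant'' is the one substantive input, and this is a genuine gap. For a general F\o lner $E$ the step is false, so no counit manipulation can prove it.

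Here is a counterexample. Take $G=\langle s\rangle\cong\mathbb Z$, $H=\langle s^2\rangle$, $\B=\Bbbk[G]$, $\K=\Bbbk[H]$ and $Q=\Bbbk[G/H]$, with $\sigma(H)=1$ and $\sigma(sH)=s$. Let $E=\Bbbk(H+sH)$, which is even $G$-invariant, so $FE=E$ for $F=\operatorname{span}(1,s)$. Yet $\Theta^{-1}(s\,\sigma(H+sH))=sH\otimes1+H\otimes s^2$ has a two-dimensional $Q$-leg. Now take $V=\Bbbk[H]$ and $V_0=\operatorname{span}(1,s^2,\dots,s^{2n})$. Then $\bar\Theta(E\otimes V_0)=(1+s)V_0$ and $\dim F(1+s)V_0=2\dim V_0$, which contradicts your product bound.

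The identity $q(\sigma(c)k)=c\,\varepsilon(k)$ only says $(\id\otimes\varepsilon_\K)\Theta^{-1}=q$. That controls a sum of $Q$-coefficients, not the coefficients themselves. What the argument needs is a nonzero finite \emph{subcoalgebra} $C\le Q$ with $\dim FC<(1+\delta)\dim C$. The paper obtains it from the coalgebraic rounding theorem \cite{bartholdi:hopfamen}*{Theorem~1.1}. For permutation modules $\Bbbk[G/H]$ in general, this amounts to extracting F\o lner sets from F\o lner subspaces, cf.\ \cite{bartholdi:aa1}*{Theorem~1.4}, so it is genuinely nontrivial. Once $C$ is a subcoalgebra, your computation closes: $\rho(F\sigma(C))\le FC\otimes\B$, so $(\Delta_Q\otimes\id)\Theta^{-1}(F\sigma(C))\le FC\otimes Q\otimes\K$, and applying $\id_Q\otimes\varepsilon_Q\otimes\id_\K$ gives $\Theta^{-1}(F\sigma(C))\le FC\otimes\K$.

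Your other route, via Lemma~\ref{lem:tensor}, does not apply as stated. Its spaces $C_t$ live in a plain module with no coalgebra structure. It can be made to work, but only with an argument you have not given.
\begin{itemize}
\item Apply the maximizer construction to $\Delta_Q(E)\le Q\otimes Q$, with the diagonal action; $\Delta_Q$ is $\B$-linear.
\item Put $N_L=\{x\in Q:\Delta_Q(x)\in Q\otimes L\}$ and check three facts: $N_L\le L$, $\Delta_Q(N_L)\le Q\otimes N_L$, and $\Delta_Q(E)\cap(Q\otimes N_L)=\Delta_Q(E)\cap(Q\otimes L)$.
\item It follows that the largest maximizers are coideals, hence subcoalgebras because $Q$ is cocommutative.
\item Finally, run the whole argument over a splitting field and descend amenability at the end, since the descent step of Lemma~\ref{lem:tensor} does not a priori preserve subcoalgebras.
\end{itemize}
Without either this argument or the rounding theorem, the proof is incomplete.
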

\begin{proof}
Let \(F\le \B\) be a finite subcoalgebra containing \(1\), and fix
\(0<\delta\). By \cite{bartholdi:hopfamen}*{Theorem~1.1}, choose a
nonzero finite subcoalgebra \(C\le Q\) with
\(\dim FC<(1+\delta)\dim C\), and put \(D=FC\).
Colinearity and the module-coalgebra identity give
\[
 \rho(F\sigma(C))\le D\otimes \B.
\]
Applying \(\id_Q\otimes\Theta^{-1}\) and using $Q$-colinearity of $\Theta$, we obtain
\[(\Delta_Q\otimes\id_\K)\Theta^{-1}(F\sigma(C))\le D\otimes Q\otimes\K,\]
and applying \(\Theta\circ(\id_Q\otimes\varepsilon_Q\otimes\id_\K)\) yields
\(F\sigma(C)\le\Theta(D\otimes \K)\).
Consequently there is a finite \(T\le\K\), containing \(1\),
such that \(F\sigma(C)\le\sigma(D)T\).
Choose a finite \(E\le V\) with
\(\dim TE<(1+\delta)\dim E\).
The induced vector-space identification \(\B\otimes_\K V\simeq Q\otimes V\)
then gives, for \(W=C\otimes E\),
\[
 FW\le D\otimes TE,\qquad
 \frac{\dim FW}{\dim W}<(1+\delta)^2.
\]
Let \(\delta\) tend to zero.
\end{proof}

\section{Amenability and extensively amenable modules}

\begin{thm}\label{thm:ext=>amen}
Every nonzero extensively amenable $\A$-module is amenable.
\end{thm}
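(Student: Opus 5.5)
We want: if $M\ne0$ and $\Sym(M)$ is amenable over $M\wr\A$, then $M$ is amenable over $\A$. The plan is to restrict the $M\wr\A$-action on $\Sym(M)$ to $\A$, extract a nonzero tensor factor by Lemma~\ref{lem:tensor}, and show that the factor can be taken inside $M$ rather than in the constant part $\Bbbk=\Sym^0(M)$, which carries the trivial action and is never the source of amenability of $M$.

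Concretely, fix a finite subcoalgebra $F\le\A$ containing $1$ and $\epsilon>0$. Since $M\ne0$, choose $0\ne m\in M$ and use the testing space $S=F+mF\le M\wr\A$, where $m$ acts by multiplication. Amenability of $\Sym(M)$ gives a finite $E\le\Sym(M)$ with $\dim(E+SE)<(1+\epsilon)\dim E$. In particular $\dim FE<(1+\epsilon)\dim E$ and $\dim(E+mE)<(1+\epsilon)\dim E$.

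The second condition forces $E$ to be almost closed under multiplication by $m$. Since $\Sym(M)$ is a domain, multiplication by $m$ is injective and raises degree by one. Hence the top-degree components of $E$ cannot carry much dimension: the part of $E$ in degree $\le n$ dominates the quotient as $n$ varies. So most of $E$ lives in positive degree, in the sense that $\dim(E\cap\Sym^{\ge1})$ or the associated graded $\gr E$ in positive degrees carries a $(1-O(\epsilon))$ fraction of the dimension. Passing to the associated graded for the degree filtration, which is $\A$-stable, we may replace $E$ by a graded subspace $\gr E$ with $\dim F\gr E\le\dim\gr(FE)$. This step needs only that the $\A$-action preserves degree.

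Next, each $\Sym^n(M)$ for $n\ge1$ is a quotient of $M^{\otimes n}=M\otimes M^{\otimes(n-1)}$ with the diagonal action, since $\A$ is cocommutative. I would rather use the image of $M\otimes\Sym^{n-1}(M)\to\Sym^n(M)$, or lift $\gr E$ in positive degrees to $\Sym^{\ge1}(M)\cong$ a subquotient of $\Sym(M)\otimes M$. Then I would apply Lemma~\ref{lem:tensor} with $U=\Sym(M)$ and $V=M$ to obtain a nonzero $V_0\le M$ with $\dim FV_0/\dim V_0\le(1+O(\epsilon))$. Letting $\epsilon\to0$ proves that $M$ is amenable.

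The main obstacle is the passage from $\Sym^n(M)$ to a tensor product $U\otimes M$: Lemma~\ref{lem:tensor} needs a subspace of a tensor product, while $\Sym^n$ is only a quotient, and Følner ratios do not lift along quotients. In characteristic zero I would use the symmetrization splitting $\Sym^n(M)\hookrightarrow M^{\otimes n}$, which is $\A$-equivariant. In general I would instead use the coproduct $\Delta\colon\Sym(M)\to\Sym(M)\otimes\Sym(M)$, which is $\A$-linear and injective, followed by projection onto $\Sym(M)\otimes M$ in the second factor: the component $\Delta_{*,1}$ is the derivation map $s\mapsto\sum\partial s\otimes x$. Its kernel on positive degrees is small in characteristic zero, and consists of $p$-th powers in characteristic $p$. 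That kernel is handled by degree bookkeeping together with the $m$-multiplication estimate, and this is where I expect the real work, possibly invoking the coalgebraic rounding theorem to assume $E$ is a subcoalgebra so that $\Delta E\le E\otimes E$.
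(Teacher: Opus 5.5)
\textbf{There is a gap: the proposal stops at the decisive step in positive characteristic.} Your ingredients are the right ones, and they are the paper's:
\begin{itemize}
\item a testing space built from $F$ and multiplication by a fixed $0\ne m\in M$;
\item the map $\Delta_{*,1}=d\colon\Sym(M)\to\Sym(M)\otimes M$, which is the universal derivation and is $\A$-linear for the diagonal action;
\item Lemma~\ref{lem:tensor} with $U=\Sym(M)$ and $V=M$.
\end{itemize}
In characteristic zero you are essentially fine. There $\ker d=\Bbbk$, not merely ``small'', since $\mu\circ d=n\,\id$ on $\Sym^n(M)$, where $\mu\colon\Sym(M)\otimes M\to\Sym(M)$ is multiplication; your symmetrization route also works. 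In characteristic $p$, however, $\ker d$ contains the whole subalgebra generated by $p$-th powers. Nothing you wrote rules out that it swallows most of $E$; you defer this to unspecified ``degree bookkeeping'' or to rounding $E$ into a subcoalgebra. Until that is settled you have no nonzero finite subspace of $\Sym(M)\otimes M$ with controlled expansion ratio, so Lemma~\ref{lem:tensor} cannot be invoked.

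\textbf{The missing idea is a short Leibniz computation.} We have $d(mu)=m\,d(u)+u\otimes m$. Put $K=\ker d$. If $k'$ and $mk'$ both lie in $K$, then $k'\otimes m=0$, so $k'=0$; hence $K\cap mK=0$. Multiplication by $m$ is injective, so $(E\cap K)\oplus m(E\cap K)\le E+mE$. Your estimate $\dim(E+mE)<(1+\epsilon)\dim E$ then gives $\dim(E\cap K)<\frac{1+\epsilon}{2}\dim E$, that is, $\dim d(E)>\frac{1-\epsilon}{2}\dim E$.

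Your worry that F\o lner ratios do not lift along quotients is misplaced here, because you push forward rather than lift. Since $d(FE)=F\,d(E)$, the map $d$ induces a surjection $FE/E\twoheadrightarrow F\,d(E)/d(E)$, so
\[
 \dim F\,d(E)-\dim d(E)\le\dim FE-\dim E<\epsilon\dim E<\frac{2\epsilon}{1-\epsilon}\dim d(E).
\]
Lemma~\ref{lem:tensor} then finishes the proof.

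The paper packages the same identity slightly differently. It sets $E'=E+mE$ and $V=d(E')$, notes that $u\mapsto(d(u),d(mu))$ embeds $E$ into $V\oplus V$, and uses a testing space controlling $FE+F(mE)=FE'$. Your testing space $F+mF$ suffices with the variant above. No grading, symmetrization or coalgebraic rounding is needed. Your step showing that ``most of $E$ is in positive degree'' is superfluous, since $\Sym^0(M)$ is one-dimensional.
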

\begin{proof}
Let $M$ be a nonzero extensively amenable $\A$-module and let
\[
 d\colon \Sym(M)\to \Sym(M)\otimes M,\qquad d(m)=1\otimes m
\]
be the universal derivation. It is $\A$-linear for the diagonal action
and by the Leibniz identity satisfies
\[d(m_1\cdots m_k)=\sum_{i=1}^k m_1\cdots \widehat{m_i}\cdots m_k\otimes m_i.\]

Fix a finite \(F\le\A\) containing \(1\), a nonzero \(m\in M\),
and \(\eta>0\). Extensive amenability gives a finite
\(E\le \Sym(M)\) such that
\[
 Z=FE+F(mE)\quad\text{satisfies}\quad \dim Z<(1+\eta)\dim E.
\]
Put \(E'=E+mE\) and \(V=d(E')\). The map
\[
 E\to V\oplus V,\qquad
 u\mapsto\bigl(d(u),d(mu)\bigr)
\]
is injective: if both coordinates vanish, the Leibniz identity gives
\(u\otimes m=0\), hence \(u=0\). Thus \(\dim E\le2\dim V\).
Since \(d\) is $\A$-linear, \(d(FE')=FV\), so \(d\) induces a
surjection \(FE'/E'\twoheadrightarrow FV/V\). As \(FE'=Z\) and
\(E\le E'\), we have
\[
 \dim FV-\dim V
 \le \dim FE'-\dim E'
 <\eta \dim E\le2\eta\dim V.
\]
Hence \(\Sym(M)\otimes M\) is amenable. Lemma~\ref{lem:tensor} extracts
F{\o}lner spaces in \(M\).
\end{proof}

\begin{thm}\label{thm:amen=>reg}
Every module over an amenable cocommutative Hopf algebra is extensively
amenable. In particular, its regular module is extensively amenable.
\end{thm}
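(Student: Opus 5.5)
The plan is to realise $\Sym(M)$ as an induced module and apply Lemma~\ref{lem:induction} with $\B=M\wr\A$ and $\K=\Sym(M)$. The hypothesis on the quotient will come from amenability of $\A$. Concretely, $\Sym(M)=\Sym(M)\smashp 1$ is a Hopf subalgebra of $M\wr\A$, and $\A=1\smashp\A$ is another. Since $\Sym(M)$ is commutative and cocommutative, its augmentation ideal $\Sym(M)^+$ is generated by $M$. Thus the quotient is
\[
 Q=(M\wr\A)/(M\wr\A)\Sym(M)^+ .
\]
However, in $M\wr\A$ the elements of $\Sym(M)$ stand to the left of $\A$. So the natural normal-basis factorisation is $\B=\Sym(M)\cdot\A$, and the right choice is rather $\K=\A$, giving $Q=\B/\B\A^+\cong\Sym(M)$ as a left $\B$-module. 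The action is exactly the defining one, $(r\smashp a)\cdot s=r(as)$, because $(r\smashp a)(s\smashp 1)\equiv r(a_{(1)}s)\varepsilon(a_{(2)})$ modulo $\B\A^+$.

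So the strategy is as follows. Take $\K=\A\le\B=M\wr\A$. Then $Q\cong\Sym(M)$ is the very module whose amenability we want, with section $\sigma(s)=s\smashp1$, and $\Theta(s\otimes a)=s\smashp a$ is the identity on $\Sym(M)\otimes\A$. This $\Theta$ is clearly a right $\A$-module map. It is also $Q$-colinear, because the coproduct of $M\wr\A$ is the tensor coproduct and $q=\id\otimes\varepsilon$. Then $\B\otimes_\A V\cong\Sym(M)\otimes V$ for any $\A$-module $V$. Lemma~\ref{lem:induction} as stated, however, assumes that $Q$ is amenable, which is circular here. So I would instead re-run its proof with the roles reversed: use amenability of $\A$ to produce F\o lner spaces, and use a degree argument in $\Sym(M)$ to handle the polynomial part.

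In detail, let $S\le M\wr\A$ be a finite subcoalgebra containing $1$. Write $S\le P\smashp T$ with $P\le\Sym^{\le d}(V_0)$ for a finite $V_0\le M$, and $T\le\A$ a finite subcoalgebra. The action is $(r\smashp a)s=r(as)$, and $\A$ preserves degree. So I look for Følner spaces of the form $E=\Sym^{\le n}(W)$, where $W\le M$ is a finite subspace. Then
\[
 SE\le \Sym^{\le d}(V_0)\cdot\Sym^{\le n}(TW)\le\Sym^{\le n+d}(V_0+TW).
\]
Here $W$ is chosen, using amenability of $M$, which holds by Corollary~\ref{cor:all-modules} (assume $M\ne0$; the case $M=0$ is trivial), with $\dim(TW+V_0)\le(1+\delta)\dim W$. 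This requires a Følner space $W$ that is also large compared with $\dim V_0$. I would obtain it by taking a Følner space for $T$ inside $M\otimes\A$, or inside $M^{\oplus k}$ transported back. More simply, Elek-type Følner spaces for a nonzero module over an amenable algebra can be taken of arbitrarily large dimension, by applying Følner sets in the regular module ${}_\A\A$, which is infinite-dimensional unless $\A$ is finite. If $\A$ is finite-dimensional, every module is locally finite and Proposition~\ref{prop:locally-finite} applies.

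Finally, compare dimensions with $q=\dim W$ and $q'=\dim(V_0+TW)\le(1+\delta)q$:
\[
 \frac{\dim SE}{\dim E}\le\binom{n+d+q'}{q'}\Big/\binom{n+q}{q}.
\]
I expect this estimate to be the main obstacle. With $q'>q$ the ratio grows like $n^{q'-q}$ as $n\to\infty$, so one cannot simply let $n\to\infty$. Instead $n$ and $q$ must be balanced, for instance by taking $n\approx q$ large. Then $\log\binom{2q}{q}\approx 2q\log2$, while $\log\binom{q+q'+d}{q'}$ exceeds it by $O(\delta q+d\log q)$. This gives a ratio $e^{O(\delta q)}$, which is not close to $1$.

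So the naive Følner space $\Sym^{\le n}(W)$ fails. I would replace it by the truncated tensor-like space $E=\Sym^{\le n}(W)$ intersected appropriately, or better, by a product-type construction $E=\prod_i\Sym^{\le n}(W_i)$ built from a tiling/quasi-tiling of Følner spaces. Alternatively, I would use the coalgebraic rounding theorem \cite{bartholdi:hopfamen}*{Theorem~1.1} to choose $W$ as a subcoalgebra-like Følner space in ${}_\A\A$ with $TW\le W+$(small), and count the contribution of $\Sym^{\le n}(TW)\setminus\Sym^{\le n}(W)$ monomials having at most a few factors outside $W$. Quantifying that most monomials of degree $n$ in $TW$ involve few factors from a $\delta q$-dimensional complement, when $n\ll q/\delta$, is the step on which I expect the proof to hinge.
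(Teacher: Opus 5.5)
There is a genuine gap. The proposal ends without any F\o lner spaces, and the estimate you single out as the crux is never proved. Tuning $n$ against $q=\dim W$ cannot rescue the family $\Sym^{\le n}(W)$. Already for $\A=\Bbbk[G]$, being F\o lner for multiplication by some $0\ne m\in M$ forces $n\gg q$, since $m\Sym^{n}(W)$ adds a boundary of relative size at least $q/(n+q)$. But then any $g$ with $gW\ne W$ nearly doubles the space, because $\Sym^{\le n}(W)\cap\Sym^{\le n}(gW)=\Sym^{\le n}(W\cap gW)$. So genuinely different F\o lner spaces would be needed, and the suggested repairs (quasi-tilings, counting monomials with few factors outside $W$) are only indicated, not carried out. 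Building F\o lner spaces in $\Sym(M)$ by hand is the analogue of doing so for the affine lamplighter action, which is the hard way.

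The missing idea is to prove that the whole Hopf algebra $\B=M\wr\A$ is amenable, and then get amenability of its nonzero module $\Sym(M)$ from Corollary~\ref{cor:all-modules}. The paper does exactly this: the commutative Hopf algebra $\Sym(M)$ is amenable, and the split cleft extension $\Sym(M)\to M\wr\A\to\A$ preserves amenability.

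Your first choice, $\K=\Sym(M)$, delivers this through Lemma~\ref{lem:induction}. You discarded it for a wrong reason: the order of the factors is no obstruction. By cocommutativity, $s\smashp a=\sum(1\smashp a_{(2)})(S(a_{(1)})s\smashp 1)$. Consequences:
\begin{itemize}
\item $\B\,\Sym(M)^+=\Sym(M)^+\smashp\A$.
\item $Q\cong\A$, with $\Sym(M)$ acting through $\varepsilon$ and $\A$ acting regularly. This $Q$ is amenable by hypothesis, so nothing is circular.
\item $\Theta(a\otimes k)=(1\smashp a)(k\smashp1)$ is bijective by the same identity and right $\K$-linear.
\item $\Theta$ is $Q$-colinear, since $q((1\smashp a)(k\smashp1))=\varepsilon(k)a$.
\end{itemize}
Now apply the lemma to the regular module $V=\K$, which is amenable by the count in Proposition~\ref{prop:locally-finite} with $\A=\Bbbk$. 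This makes $\B\otimes_\K\K=\B$ amenable, and Corollary~\ref{cor:all-modules} finishes. This is essentially the argument the paper writes out for the exterior version.
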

\begin{proof}
The commutative Hopf algebra \(\Sym(M)\) is amenable
\cite{bartholdi:hopfamen}*{Theorem~2.4}. The split cleft extension
\[
 \Bbbk\longrightarrow \Sym(M)\longrightarrow M\wr\A
       \longrightarrow \A\longrightarrow\Bbbk
\]
therefore makes \(M\wr\A\) amenable by \cite{bartholdi:hopfamen}*{Theorem~2.5}.
Its nonzero module \(\Sym(M)\) is amenable by
Corollary~\ref{cor:all-modules}.
\end{proof}

\section{Exact sequences}

We first recall an elementary filtered-space observation. For compatible
increasing exhaustive filtrations \((R_n)_{n\ge-1}\) and \((V_n)_{n\ge-1}\)
on an algebra \(R\) and an \(R\)-module \(V\), with \(R_{-1}=V_{-1}=0\),
amenability of \(V\) implies amenability of \(\gr V\) over \(\gr R\).
Indeed, lift a finite homogeneous subspace \(\overline F\le\gr R\)
to \(F\le R\), take a F{\o}lner space \(E\), and put \(E'=E+FE\).
Then
\[
 \gr E+\overline F\,\gr E\le\gr E',\qquad
 \dim\gr E=\dim E,\quad \dim\gr E'=\dim E'.
\]
This is the leading-symbol argument of \cite{bartholdi:hopfamen}*{Lemma~7.2};
the exhaustive nonnegative filtration ensures that the induced filtrations
on finite spaces preserve dimension. The same argument applies to
decreasing filtrations \(V=V_0\ge V_1\ge\cdots\) with
\(\bigcap_nV_n=0\): their intersections with any finite-dimensional
space eventually vanish. In particular, for an already graded algebra
and module one may take homogeneous F{\o}lner spaces, after enlarging
the finite testing space to a homogeneous one.

\begin{thm}\label{thm:exact}
For every exact sequence of $\A$-modules
\[
 0\longrightarrow M\longrightarrow N\longrightarrow P\longrightarrow0,
\]
$N$ is extensively amenable if and only if both $M$ and $P$ are extensively amenable.
\end{thm}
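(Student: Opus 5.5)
The plan is to treat the three implications separately: \(N\Rightarrow M\), \(N\Rightarrow P\), and \((M,P)\Rightarrow N\). In each case Lemma~\ref{lem:tensor}, Lemma~\ref{lem:induction} and the filtered-space observation do the transfer. Write \(\iota\colon M\to N\) and \(\pi\colon N\to P\).

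\emph{From \(N\) to \(P\).} The map \(\pi\) induces a surjective Hopf map \(N\wr\A\to P\wr\A\) and a surjection \(\Sym(N)\twoheadrightarrow\Sym(P)\) compatible with the actions. The kernel is the ideal generated by \(M\); it carries the decreasing \(M\)-adic filtration \(\Sym(N)\ge M\Sym(N)\ge M^2\Sym(N)\ge\cdots\). This filtration is separated, since after choosing a linear splitting \(N=M\oplus P\) we have \(\Sym(N)=\Sym(M)\otimes\Sym(P)\), and its graded pieces are degrees in \(M\). By the filtered-space observation, \(\gr\Sym(N)\cong\Sym(M)\otimes\Sym(P)\) is amenable over \(\gr(N\wr\A)\). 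That algebra contains \(P\wr\A\), acting on the second factor, as a graded subalgebra of degree \(0\). Restricting to it, the module is \(\Sym(M)\otimes\Sym(P)\) with \(P\wr\A\) acting diagonally, where \(\Sym(P)\) acts trivially (by counit) on \(\Sym(M)\). Lemma~\ref{lem:tensor}, applied with \(\HH=P\wr\A\), \(U=\Sym(M)\) and \(V=\Sym(P)\), then makes \(\Sym(P)\) amenable over \(P\wr\A\).

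\emph{From \(N\) to \(M\).} Now \(M\wr\A\le N\wr\A\) is a Hopf subalgebra, and \(\Sym(N)\), restricted to \(M\wr\A\), is isomorphic to \(\Sym(M)\otimes\Sym(P)\), up to the filtration by \(P\)-degree, which is an increasing exhaustive filtration. On the associated graded, \(M\wr\A\) acts diagonally: by multiplication and action on \(\Sym(M)\), and through \(\A\) alone on \(\Sym(P)\). Restriction of amenability to a subalgebra is immediate, since the testing spaces are fewer. The filtered-space observation then passes amenability to the graded module, and Lemma~\ref{lem:tensor}, with the roles of the factors swapped so that \(V=\Sym(M)\), extracts amenability of \(\Sym(M)\) over \(M\wr\A\).

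\emph{From \(M\) and \(P\) to \(N\).} This is the main obstacle. I would again filter by \(M\)-degree to reduce to the graded situation. Amenability of the graded module should lift back to \(\Sym(N)\) by a standard leading-term argument, choosing F{\o}lner spaces spanned by lifts of homogeneous elements. There is a catch, though. Lifting from \(\gr\) to the filtered module is \emph{not} covered by the observation, which only goes downward, so the Følner estimate must be checked directly. The point is that \(F\) applied to a lift lands in lifts of \(F\) of the graded pieces plus lower-filtration terms, and those must be absorbed.

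For the graded object \(\Sym(M)\otimes\Sym(P)\) over \(\B=\gr(N\wr\A)\cong(M\oplus P)\wr\A\), take \(\K=M\wr\A\) and \(Q=\B/\B\ker\varepsilon_\K\cong\Sym(P)\). The tensor decomposition gives a section \(\sigma\) making \(\Theta\) a normal-basis isomorphism. Hence Lemma~\ref{lem:induction} applies, provided \(Q\) is amenable over \(\B\). This holds by the hypothesis on \(P\), since \(\B\) acts on \(Q\) through \(P\wr\A\). The lemma yields amenability of \(\B\otimes_\K\Sym(M)\cong\Sym(M)\otimes\Sym(P)\).

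I expect the care to be needed in two places. First, verifying the comodule compatibility of \(\Theta\) when \(\A\) acts non-trivially across the splitting; this is why I pass to \(\gr\), where the extension splits as modules. Second, the lifting from the graded module back to \(\Sym(N)\).
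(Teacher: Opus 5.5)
Your descent arguments are fine. Restricting the graded module $\Sym(M)\otimes\Sym(P)$ to the Hopf subalgebras $P\wr\A$ and $M\wr\A$ and applying Lemma~\ref{lem:tensor} is a legitimate substitute for the paper's route. The paper passes once to $\gr\Sym(N)\simeq\Sym(M\oplus P)$ and then applies the module-coalgebra quotient theorem to the two projections.

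The ascent has a genuine gap, exactly where you flag the ``catch''. Applying Lemma~\ref{lem:induction} to the graded object only proves that $M\oplus P$ is extensively amenable. There is no general leading-term argument lifting amenability from $\gr V$ over $\gr R$ back to $V$ over $R$. The lower-order parts of $fe$ are arbitrary elements of lower filtration, and they need not lie in any small enlargement of a lifted F{\o}lner space. The implication is in fact false in general. Take the free Lie algebra $\mathscr L$ on two generators: $\gr U(\mathscr L)=\Sym(\mathscr L)$ is amenable over itself, whereas $U(\mathscr L)$ is the free associative algebra on two generators, which is not. So, as written, the converse is not proved.

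The detour is also unnecessary, because the obstruction you wanted to avoid is not there. The paper applies Lemma~\ref{lem:induction} directly to $\B=N\wr\A$, $\K=M\wr\A$ and $Q=\B/\B\ker\varepsilon_\K\simeq\Sym(P)$.

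\emph{Colinearity.} Any linear section $P\to N$, equivariant or not, induces an algebra and coalgebra section $\sigma\colon\Sym(P)\to\Sym(N)$. Then $\rho(\sigma(p))=\sum p_{(1)}\otimes\sigma(p_{(2)})$ and $q(bk)=q(b)\varepsilon(k)$ make $\Theta(p\otimes k)=\sigma(p)k$ a $Q$-comodule map.

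\emph{Normal basis.} Polynomial normal form makes $\Theta$ a right $\K$-module isomorphism.

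\emph{Absorption.} The failure of $\A$-equivariance of $\sigma$ is precisely your ``lower-order terms''. The proof of Lemma~\ref{lem:induction} absorbs them via colinearity, through the inclusion $F\sigma(C)\le\sigma(D)T$ with $T\le\K$ finite. Any correct lift of your graded F{\o}lner spaces $C\otimes E$ would have to reprove exactly this inclusion in the ungraded $\B$.

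\emph{Conclusion.} $\B$ acts on $Q$ through $P\wr\A$, so $Q$ is amenable over $\B$ by the hypothesis on $P$. The action map identifies $\B\otimes_\K\Sym(M)$ with $\Sym(N)$. Together these give extensive amenability of $N$ directly.
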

\begin{proof}
Suppose first that $N$ is extensively amenable. Give \(N\) the two-step filtration $0\le M\le N$, giving $M$ weight zero and \(N/M\simeq P\) in degree $1$. Then $\gr N\simeq M\oplus P$, and the induced multiplicative filtrations satisfy
\[
 \gr\Sym(N)\simeq\Sym(M\oplus P),\qquad
 \gr(N\wr\A)\simeq\Sym(M\oplus P)\smashp\A.
\]
The preceding observation proves that $M\oplus P$ is extensively amenable.
The two projections of \(M\oplus P\) induce surjective counital
module-coalgebra maps to \(\Sym(M)\) and \(\Sym(P)\). The quotient theorem
\cite{bartholdi:hopfamen}*{Theorem~1.9}, with the actions factored through \(M\wr\A\)
and \(P\wr\A\), proves both conclusions.

Conversely, take \(\B=N\wr\A\), \(\K=M\wr\A\), and
\(Q=\B/\B\ker\varepsilon_\K\simeq\Sym(P)\), with quotient map \(q\) and coaction
\(\rho=(q\otimes\id)\Delta_\B\).
A vector-space section \(P\to N\) induces an algebra and coalgebra
section \(\sigma\colon\Sym(P)\to\Sym(N)\le \B\). Multiplication gives
\[
 \Theta\colon\Sym(P)\otimes \K\xrightarrow{\ \simeq\ } \B,\qquad
 p\otimes k\mapsto\sigma(p)k.
\]
It is a right \(\K\)-module isomorphism by the polynomial normal form,
and is colinear because
\(\rho(\sigma(p))=\sum p_{(1)}\otimes\sigma(p_{(2)})\) and
\(q(bk)=q(b)\varepsilon(k)\).
The \(\B\)-action on \(\Sym(P)\) factors through \(P\wr\A\), so \({}_\B(\Sym(P))\) is amenable.
Apply Lemma~\ref{lem:induction} to the amenable \(\K\)-module \(\Sym(M)\).
The action map
\[
 \B\otimes_\K\Sym(M)\to\Sym(N),\qquad
 (r\smashp a)\otimes s\mapsto r(a\cdot s),
\]
identifies these modules, proving that $N$ is extensively amenable.
\end{proof}

\begin{cor}\label{cor:unions}
Extensive amenability passes to submodules and quotients, is closed
under extensions and direct sums, and is local on directed unions
of $\A$-submodules.
\end{cor}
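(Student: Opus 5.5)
The first three assertions follow from Theorem~\ref{thm:exact}, and the fourth needs a separate compactness argument.

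\emph{Submodules and quotients.} Given a submodule \(M\le N\), apply Theorem~\ref{thm:exact} to \(0\to M\to N\to N/M\to 0\). If \(N\) is extensively amenable, so are \(M\) and \(N/M\).

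\emph{Extensions and direct sums.} Closure under extensions is the converse direction of Theorem~\ref{thm:exact}. For finite direct sums, use the split sequence \(0\to M\to M\oplus P\to P\to0\) and induct. An arbitrary direct sum \(\bigoplus_{i\in I}M_i\) is the directed union of its finite subsums. So this case reduces to locality on directed unions.

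\emph{Locality.} Let \(N=\bigcup_j N_j\) be a directed union of submodules. If \(N\) is extensively amenable, then each \(N_j\) is, by passage to submodules. Conversely, suppose every \(N_j\) is extensively amenable. Take a finite testing space \(S\le N\wr\A\). We may assume \(S=S_1\otimes S_2\) with \(S_1\le\Sym(N)\) and \(S_2\le\A\) finite. Only finitely many elements of \(N\) occur in the polynomial coefficients spanning \(S_1\). By directedness they all lie in one \(N_j\), so \(S\le N_j\wr\A\).

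Next we check that \(\Sym(N_j)\le\Sym(N)\) is stable under \(N_j\wr\A\). This holds because \(N_j\) is an \(\A\)-submodule and \(\Sym(N_j)\) is a subalgebra. The embedding \(\Sym(N_j)\to\Sym(N)\) is injective, because over a field a subspace inclusion induces an injection of symmetric algebras. Its image is a \(N_j\wr\A\)-submodule, and the action of \(S\) on it is the restriction of the action on \(\Sym(N)\).

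Extensive amenability of \(N_j\) now gives a finite \(E\le\Sym(N_j)\) with \(\dim((E+SE)/E)<\epsilon\dim E\). Since \(E\) and \(SE\) lie inside \(\Sym(N_j)\), this same \(E\) is a F\o lner space in \(\Sym(N)\) for \(S\). Hence \(N\) is extensively amenable.

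The main obstacle is the reduction of an arbitrary testing space to one involving only finitely many variables from \(N\). This uses the tensor decomposition \(N\wr\A=\Sym(N)\otimes\A\) as a vector space, together with the fact that every finite subspace of \(\Sym(N)\) lies in \(\Sym\) of a finite subspace of \(N\). Everything else is bookkeeping on Theorem~\ref{thm:exact}.
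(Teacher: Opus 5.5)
Your proof is correct and follows the paper's argument. Submodules, quotients, extensions and finite direct sums come straight from Theorem~\ref{thm:exact}. Locality holds because the finitely many coefficients of a testing space lie in a single \(\Sym(N_j)\), whose F{\o}lner spaces also serve in \(\Sym(N)\), and arbitrary direct sums are directed unions of finite ones.
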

\begin{proof}
The first assertions and the finite direct-sum assertion follow from
Theorem~\ref{thm:exact}. If \(M=\bigcup_iM_i\) is directed and every
\(M_i\) is extensively amenable, the coefficients of any finite subspace of \(M\wr\A\) lie in some \(\Sym(M_i)\); a F{\o}lner space there works in
\(\Sym(M)\). The converse follows by submodule permanence. Arbitrary
direct sums are directed unions of finite ones.
\end{proof}

\section{Group actions and permutation modules}\label{sec:groups}
Juschenko, Matte Bon, Monod and de la Salle call a \(G\)-set \(X\) \emph{extensively amenable} if there is a \(G\)-invariant mean on the collection \(\Pf(X)\) of finite subsets of $X$ giving mass one to
\(\{B:T\subseteq B\}\) for every finite \(T\subseteq X\), see
\cite{juschenko-mattebon-monod-delasalle:extensive}*{Definition~1.1} or \cite{bartholdi:amenability}*{Definition~9.1}. We immediately pass to an equivalent definition, see~\cite{juschenko-mattebon-monod-delasalle:extensive}*{Theorem~1.3}: with \(L=\Z^{(X)}\), $X$ is extensively amenable if and only if the affine action \(L\rtimes G\curvearrowright L\) is amenable.

In case $\A$ is a group ring $\Bbbk[G]$, our wreath product $M\wr\A=\Sym(M)\#\A$ is also known as the crossed product algebra $\Sym(M)\rtimes G$.

\begin{thm}\label{thm:group}
For every field \(\Bbbk\) and every \(G\)-set \(X\), the permutation
\(\Bbbk[G]\)-module \(\Bbbk[X]\) is extensively amenable if and only if
\(G\curvearrowright X\) is extensively amenable.
\end{thm}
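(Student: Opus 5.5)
We compare the module $\Sym(\Bbbk[X])=\Bbbk[x_i:i\in X]$ over $\Bbbk[X]\wr\Bbbk[G]=\Bbbk[x_i]\rtimes G$ with the affine action of $L\rtimes G$ on $L=\Z^{(X)}$, in the form given by \cite{juschenko-mattebon-monod-delasalle:extensive}*{Theorem~1.3}. The dictionary is the monomial basis: the monomials $x^\lambda$, $\lambda\in\N^{(X)}$, form a basis of $\Sym(\Bbbk[X])$ permuted by $G$, and multiplication by $x_i$ sends $x^\lambda$ to $x^{\lambda+e_i}$. Thus, up to linear span, $\Sym(\Bbbk[X])$ is the permutation-like module attached to the action of the monoid $\N^{(X)}\rtimes G$ on $\N^{(X)}$. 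The plan is to prove two transfers, one in each direction, passing through this monoid action and \cite{bartholdi:aa1}*{Theorem~1.4}-type comparisons between $\Bbbk$-linear and set-theoretic F{\o}lner conditions.

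\emph{($\Leftarrow$)} Suppose $G\curvearrowright X$ is extensively amenable. I would use the flexibility recalled in the introduction and take the version with the monoid $\N^{(X)}$, or directly the JMMS invariant mean on $\Pf(X)$. From finite sets $B$ that are almost $G$-invariant and contain any given finite $T$ (in the averaged sense), I would build F{\o}lner spaces
\[E=\operatorname{span}\{x^\lambda:\operatorname{supp}\lambda\subseteq B,\ |\lambda|\le n\}.\]
The argument is the one in Proposition~\ref{prop:locally-finite}. The testing space involves finitely many variables $x_i$ with $i\in T$, degree $\le d$, and finitely many $g\in S$. Multiplication expands $E$ by a ratio $\binom{n+d+|B|}{|B|}/\binom{n+|B|}{|B|}\to1$ as $n\to\infty$ with $B$ fixed. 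The $G$-part moves $E$ to the analogous space on $gB$, and the boundary is controlled by $|gB\triangle B|/|B|$ times the same binomial quantity.

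The catch is that monomials supported on $B\cap gB$ are shared, so I need the proportion of monomials using a variable outside a subset to be small. Since degree-$n$ monomials on $|B|$ variables concentrate on all variables, a large $n$ does not suffice. The remedy is to let $n$ be small compared with $|B|$, for instance to take the multilinear ($\Z/2$-type) monomials, or degree $\le1$ per variable. For those, the fraction involving a given variable outside is controlled by the mean. In this case I would reduce to the $(\Z/2)^{(X)}$ or $B^{(X)}$ variant, with $B$ finite cyclic, and treat the resulting permutation module of a group action via \cite{bartholdi:aa1}*{Theorem~1.4}. A cleaner alternative route is to filter $\Sym(\Bbbk[X])$ so that $\gr$ becomes $\Bbbk[x_i]/(x_i^p)$ or similar, and then apply the filtered-space observation.

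\emph{($\Rightarrow$)} Suppose $\Sym(\Bbbk[X])$ is amenable over $\Bbbk[x_i]\rtimes G$. By the homogeneity remark, together with the $\Z^{(X)}$-grading by multidegree (which is preserved by the variables and permuted by $G$), I can take F{\o}lner spaces spanned by monomials. The linear-to-set step needs care: for a subspace homogeneous for a grading whose degree pieces are one-dimensional, $E$ is spanned by a finite set of monomials $\Lambda\subseteq\N^{(X)}$. Then $\dim(E+x_iE+gE)/E$ equals $|(\Lambda\cup(\Lambda+e_i)\cup g\Lambda)\setminus\Lambda|$. This grading is only by $\Z^{(X)}$, and $G$ permutes the degrees, so I would first enlarge the testing space to $G$-translates and then apply the graded reduction, which gives a monomial F{\o}lner set. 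This yields F{\o}lner sets for $\N^{(X)}\rtimes G$ acting on $\N^{(X)}\subseteq\Z^{(X)}$. Such sets are F{\o}lner for the affine action of $\Z^{(X)}\rtimes G$ on $\Z^{(X)}$, since the generators $-e_i$ are controlled by the inverse translates. Hence the action is amenable, and $X$ is extensively amenable by \cite{juschenko-mattebon-monod-delasalle:extensive}*{Theorem~1.3}. This direction is independent of the characteristic of $\Bbbk$, as the statement requires.

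The main obstacle is the ($\Leftarrow$) direction: converting the invariant mean into polynomial F{\o}lner spaces, where the naive high-degree choice fails for the reason explained above. If the direct construction resists, I would try instead to show that amenability of the affine action on $\N^{(X)}$, obtained from $\Z^{(X)}$ by a translation argument that pushes F{\o}lner sets into the positive cone, directly gives a monomial F{\o}lner set for $\Sym(\Bbbk[X])$.
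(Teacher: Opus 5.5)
There is a genuine gap, mainly in your ($\Leftarrow$) direction: none of the routes you list produces F\o lner spaces in $\Sym(\Bbbk[X])$.

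\emph{Squarefree monomials fail.} Suppose $E$ is spanned by $x_B$, $B\in\mathcal F$. For $i\in B$ the monomial $x_ix_B$ contains $x_i^2\ne0$, so it lies outside $E$. If $\mathcal F$ is F\o lner for the toggle at $i$, about half of its members contain $i$. Hence $\dim((E+x_iE)/E)\approx\frac12\dim E$.

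\emph{Finite lamps prove the wrong thing.} Passing to the $(\Z/2)^{(X)}$ or finite-lamp permutation modules gives amenability of a different module. You offer no way to transport F\o lner spaces back to $\Sym(\Bbbk[X])$.

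\emph{The filtration runs backwards.} Filtering $\Sym(\Bbbk[X])$ so that its associated graded is truncated misuses the filtered-space observation, which carries amenability \emph{from} a filtered module \emph{to} its associated graded.

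\emph{Translation into the cone loses $G$.} Translating a F\o lner set $F\subseteq\Z^{(X)}$ by $v$ into $\N^{(X)}$ turns $g$ into the affine map $\lambda\mapsto g\lambda+gv-v$. Since $v$ must be chosen after $F$, almost-invariance under $G$ is lost.

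The missing idea is to run the filtration the other way. Let $R=\Bbbk[\Z^{(X)}]$ with augmentation ideal $\varpi$. Then $\bigcap_n\varpi^n=0$ and $\gr_\varpi R\cong\Sym(\Bbbk[X])$, $G$-equivariantly, in every characteristic. This is exactly where the torsion-free lamp group $\Z$ is needed; finite lamps do not give $\Sym$. Now $R$ is the permutation module of the amenable affine action of $\Z^{(X)}\rtimes G$, so it is amenable over $R\smashp\Bbbk[G]\cong\Bbbk[\Z^{(X)}\rtimes G]$. The separated-filtration observation then transfers this to $\Sym(\Bbbk[X])$ over $\Sym(\Bbbk[X])\smashp\Bbbk[G]$.

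Your box idea can also be rescued combinatorially, but only by averaging over $B$:
\begin{itemize}
\item take a finitely supported probability measure $\mu$ on $\Pf(X)$, almost invariant under the finitely many $g$, with $\mu\{B\supseteq T\}\approx1$;
\item put $\nu=\sum_B\mu(B)\,\mathrm{unif}(\{0,\dots,n-1\}^B)$ and let $n\to\infty$;
\item span the monomials in a suitable level set of $\nu$.
\end{itemize}
A single box fails for the reason you identified.

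In the ($\Rightarrow$) direction, the reduction to monomial F\o lner spaces is not justified. The multidegree grading is not a grading of $\Sym(\Bbbk[X])\smashp\Bbbk[G]$: the element $1\smashp g$ sends degree $\lambda$ to $g\lambda$, so it is not homogeneous. No finite enlargement of the testing space, by $G$-translates or otherwise, changes that, so the homogeneous-component argument does not apply. Passing from linear F\o lner spaces to F\o lner sets is a Gromov-type ordering statement, available for group actions by Theorem~1.4 of \cite{bartholdi:aa1}. It does not follow from a grading.

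The paper avoids monomial spaces altogether:
\begin{itemize}
\item push $E$ into $R$ by $x\mapsto u_x-1$ (or $u_x$);
\item the image is F\o lner for each $u_x$ and each $g$;
\item it is then F\o lner for $u_x^{-1}$, because $\dim(W+a^{-1}W)=\dim(W+aW)$, and for words by subadditivity of $\dim(W+aW)-\dim W$;
\item finally apply the linear-versus-set comparison to the group action $\Z^{(X)}\rtimes G\curvearrowright\Z^{(X)}$.
\end{itemize}
Your final step, that F\o lner sets in $\N^{(X)}$ are F\o lner for $\Z^{(X)}\rtimes G$, is fine.
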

\begin{proof}
  Set \(L=\Z^{(X)}\) and $R=\Bbbk[L]$, and write $u_x$ for the standard generators of $L$ and $z_x=u_x-1$. Let \(\varpi=\ker(\varepsilon\colon R\to\Bbbk)\) be the augmentation ideal, and filter \(R\) by \(\varpi^n\). The classical description of the augmentation filtration of a free abelian group gives
\[
 \bigcap_{n\ge0}\varpi^n=0,\qquad
 \gr_\varpi R=\Sym_\Bbbk(\Bbbk\otimes_\Z L)\simeq\Sym(\Bbbk[X]),\]
see~\cite{passi:gr}*{Chapter~VI and Corollary~VIII.8.2}.
The isomorphism sends \(x\) to the initial form of \(u_x-1\), so is
\(G\)-equivariant. Since \(G\) preserves \(\varpi\), the compatible
filtration \((R\smashp\Bbbk[G])_n=\varpi^n\smashp\Bbbk[G]\) gives
\begin{equation}\label{eq:gradedkG}
  \gr(R\smashp\Bbbk[G])\simeq\Sym(\Bbbk[X])\smashp\Bbbk[G].
\end{equation}

  Here \(R\smashp\Bbbk[G]\simeq\Bbbk[L\rtimes G]\), and \(R\)
is the permutation module of the affine action \(L\rtimes G\curvearrowright L\).
By the preceding criterion and the permutation-module equivalence
\cite{bartholdi:hopfamen}*{\S3}, it suffices to prove that
\(\Sym(\Bbbk[X])\) is amenable under \(\Sym(\Bbbk[X])\smashp\Bbbk[G]\)
if and only if \(R\) is amenable under \(R\smashp\Bbbk[G]\).

  Suppose first that \(\Sym(\Bbbk[X])\) is amenable under $\Bbbk[X]\wr\Bbbk[G]$. The $G$-equivariant map
  \[\Sym(\Bbbk[X])\to R,\qquad x\mapsto z_x\]
  identifies the symmetric algebra with the polynomial subalgebra $\Bbbk[z_x:x\in X]$; these are also the canonical lifts of the generators in $\gr_\varpi R\simeq\Sym(\Bbbk[X])$. If $E$ is a F\o lner space in $\Sym(\Bbbk[X])$, let $\widetilde E$ be its image in $R$. For $g\in G$ the expansion ratios of $E$ and $\widetilde E$ agree, while $\widetilde E+u_x\widetilde E=\widetilde E+z_x\widetilde E$ shows that multiplication by $u_x$ has the same boundary as multiplication by $z_x$.

For any finite-dimensional subspace \(W\) and invertible linear operators
\(a,b\) on its ambient space, put
\(b_W(a)=\dim(W+aW)-\dim W\). Then
\begin{equation}\label{eq:operator-boundary}
 b_W(ab)\le b_W(a)+b_W(b),\qquad b_W(a^{-1})=b_W(a).
\end{equation}
The first inequality follows by enlarging \(W+abW\) to \(W+aW+abW\); the additional dimension beyond $W+aW$ is at most $\dim((aW+abW)/aW)=\dim((W+bW)/W)=b_W(b)$; the second follows by applying \(a\) to \(W+a^{-1}W\).
Given a finite subset of \(L\rtimes G\), express its elements as words in
finitely many \(u_x^{\pm1}\) and elements of \(G\). Choose \(E\)
simultaneously F\o lner for the corresponding variables \(x\) and group
operators, with error small compared with the sum of the word lengths.
By \eqref{eq:operator-boundary} and
\[
 \dim\Bigl(W+\sum_j a_jW\Bigr)-\dim W\le\sum_j b_W(a_j),
\]
the image \(\widetilde E\) is F\o lner for the prescribed finite subset.
Thus \(R\) is amenable under \(R\smashp\Bbbk[G]\), so the affine action
is amenable and \(G\curvearrowright X\) is extensively amenable.

  Conversely, suppose that $R$ is amenable under $R\smashp\Bbbk[G]$. The separated-filtration argument preceding Theorem~\ref{thm:exact} proves that amenability is preserved on the associated graded, so \(\Bbbk[X]\) is extensively amenable by~\eqref{eq:gradedkG}.
\end{proof}

By the lamp-functor theorem \cite{juschenko-mattebon-monod-delasalle:extensive}*{Theorem~1.3}, or its
action-valued form \cite{bartholdi:amenability}*{Proposition~9.5}, these conditions are
also equivalent to amenability of the affine actions
\[
 L^{(X)}\rtimes G\curvearrowright L^{(X)}
\]
for \(L=\mathbb Z/2\), \(L=\mathbb Z\), or \(L=(\Bbbk,+)\).
We used \(L=\Z\) in the proof above, and will consider $L=\Z/2$ in Section~\ref{sec:exterior}.

\section{The exterior algebra version}\label{sec:exterior}
We consider a variant of the definition of extensive amenability: let us call the $\A$-module $M$ \emph{$\wedge$-extensively amenable} if the exterior algebra \(\bigwedge M\) is amenable under multiplication and the induced $\A$-action. The results above have exterior
analogues, but outside characteristic two one must retain the
supercoalgebra signs. We give the required modifications.

\subsection{Superalgebras and the coalgebra arguments}

Regard $\A$ as purely even and \(M\) as purely odd. We work in
supervector spaces, with the conventions
\[
 (u\otimes v)(u'\otimes v')
   =(-1)^{|v||u'|}uu'\otimes vv',
 \qquad
 \tau(u\otimes v)=(-1)^{|u||v|}v\otimes u.
\]
Here as usual \(|u|\) denotes degree. The exterior algebra $\bigwedge M$ is a
supercommutative, supercocommutative Hopf superalgebra with
\(\Delta(m)=m\otimes1+1\otimes m\) and \(\varepsilon(m)=0\).
Every $m\in M$ satisfies $m^2=0$ in $\bigwedge M$. Put
\[
  M\wr_\wedge\A=\bigwedge M\smashp\A,
\]
a Hopf superalgebra admitting $\bigwedge M$ as a supermodule coalgebra. Both $\bigwedge M$ and $M\wr_\wedge\A$ are $\N$-graded objects. Following the observation preceding Theorem~\ref{thm:exact}, we may restrict our considerations to degree-graded F\o lner spaces.

We unfortunately need to reprove some of the previous results in the setting of Hopf superalgebras. We gather the required conclusions in the following lemma.
\begin{lem}\label{lem:pointed}
Let \(\HH\) be a nonnegatively graded Hopf superalgebra, with parity
given by degree modulo two and cocommutative degree-zero part.
For compatibly nonnegatively graded objects and degree-preserving
maps, the conclusions of \cite{bartholdi:hopfamen}*{Theorems~1.1 and~1.9}
(coalgebraic rounding and module-coalgebra quotient permanence),
and Lemmas~\ref{lem:tensor} and~\ref{lem:induction} hold in the
superalgebra convention. In the quantitative statements the finite
testing subcoalgebras and input subspaces are assumed homogeneous,
and the resulting subcoalgebras or subspaces may be chosen homogeneous.
The amenability conclusions hold for arbitrary finite testing spaces.
\end{lem}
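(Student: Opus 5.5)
The plan is to rerun each cited argument in the graded super setting and isolate the few places where cocommutativity or the structure of the coradical enters. The key structural fact is that a nonnegatively graded Hopf superalgebra \(\HH\), with parity equal to degree mod two, is a graded extension of its degree-zero part \(\HH_0\), and \(\HH_0\) is an ordinary cocommutative Hopf algebra. Every homogeneous finite subcoalgebra \(F\le\HH\) has its coradical in \(\HH_0\): the coradical filtration is refined by degree, because \(\Delta(h)-\sum(\text{degree-}0)\otimes h\) lies in strictly lower degree in the right factor. So the group-likes, and the codimension-one argument of \cite{bartholdi:hopfamen}*{Proposition~1.8}, only ever involve even elements. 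Concretely, for a homogeneous flag \(0=F_0<\cdots<F_s=F\) of subcoalgebras, I will show that each \(a\in F_i\setminus F_{i-1}\) homogeneous still satisfies \(\Delta(a)-g\otimes a\in F_i\otimes F_{i-1}\) with \(g\) group-like and even. I get this by filtering first by degree and then applying the ordinary statement in \(\HH_0\).

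Next I check that the remaining ingredients only use linear algebra that is insensitive to signs. The supermodular rank functions \(r_Z(L)=\dim(Z\cap(U\otimes L))\) and the density identity \cite{bartholdi:hopfamen}*{Lemma~1.2} do not see the grading. The only new point is that in Lemma~\ref{lem:tensor} the diagonal action of \(a\) reads \(\delta_a(u\otimes v)=\sum(-1)^{|a_{(2)}||u|}a_{(1)}u\otimes a_{(2)}v\). Modulo \(U\otimes C^+\) the triangular relation becomes
\[
(\id_U\otimes q)\delta_a=(g_U\otimes\bar a)\circ(\text{sign operator } u\otimes c\mapsto(-1)^{|a||u|}u\otimes c).
\]
The sign operator is an invertible homogeneous automorphism preserving each \(U\otimes L\), so the kernel computation \(U\otimes K\) and the injection of quotients go through verbatim. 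To make \(r_Z\) and \(C_t\) compatible with this, I restrict \(E\) and \(E^+\) to be homogeneous, and I take \(C_t\) homogeneous. The largest maximizer is unique, and the grading automorphism \(v\mapsto\lambda^{\deg v}v\) (after a field extension if \(\Bbbk\) is finite, using the descent Lemmas~1.6--1.7) permutes maximizers, so the largest one is graded. The non-split case is handled by the same descent.

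For coalgebraic rounding (\cite{bartholdi:hopfamen}*{Theorem~1.1}) and quotient permanence (Theorem~1.9), I apply the same largest-maximizer construction to homogeneous data. Where the original proof uses the comultiplication of a module coalgebra, I insert the braiding \(\tau\) with its sign. Since all maps involved are degree-preserving and the signs are units, dimensions of images and kernels are unchanged. Lemma~\ref{lem:induction} is purely formal once the rounding theorem holds: colinearity of \(\Theta\) and the identity \(F\sigma(C)\le\Theta(D\otimes\K)\) involve no swaps beyond \(\Delta_Q\otimes\id\), and the sign conventions only reorder homogeneous terms.

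Finally, I deduce the amenability conclusions for arbitrary finite testing spaces from the homogeneous ones. Any finite \(S\) is contained in a homogeneous finite subcoalgebra containing \(1\), by taking homogeneous components and applying the fundamental theorem of coalgebras in the graded setting. By the filtration observation before Theorem~\ref{thm:exact}, Følner spaces may be taken homogeneous.

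The main obstacle is the first step: making sure that homogeneous subcoalgebra flags exist and that the group-like \(g\) in the codimension-one argument is even and invertible, so that the injection argument survives the signs. I will attack this by working with the degree filtration and reducing to \(\HH_0\).
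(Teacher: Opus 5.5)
\textbf{Verdict: there is a genuine gap.} Your overall architecture is the paper's: enlarge testing spaces to homogeneous subcoalgebras via homogeneous hulls, and run the codimension-one arguments with an even group-like leading term. You absorb the sign as the invertible operator $\Pi_U^{|a|}$ on the first tensor factor; your formula for $(\id_U\otimes q)\delta_a$ matches the paper's leading action $h\Pi_U^{|a|}\otimes a$. You then descend from a splitting field and treat Lemma~\ref{lem:induction} as formal.

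The gap is the step you single out as the main obstacle and then leave as a plan. You need the existence, after a finite extension of $\Bbbk$, of a complete flag of \emph{homogeneous} subcoalgebras of a finite graded subcoalgebra $F$. Everything else rests on this: without such a flag there is no homogeneous $a$ spanning a one-dimensional layer for a triangular relation to act on. Moreover, ``applying the ordinary statement in $\HH_0$'' does not produce such a flag, because a flag of the degree-zero part says nothing about how to insert the positive-degree pieces.

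Write $F=\bigoplus_nF_n$ for the degree decomposition (not your flag). What is needed is the following.
(i) Since $\Delta(F_n)\le\bigoplus_{i+j=n}F_i\otimes F_j$, each $F_{\le n}=\bigoplus_{k\le n}F_k$ is a subcoalgebra. The ordinary cocommutative statement handles $F_{\le 0}=F_0$.
(ii) For $U\le F_n$, the space $F_{\le n-1}\oplus U$ is a subcoalgebra as soon as $U$ is a sub-bicomodule for the two $F_0$-coactions on $F_n$. These coactions are given by the $(0,n)$ and $(n,0)$ components of $\Delta$.
(iii) Over $\overline{\Bbbk}$, every finite $F_0$-bicomodule has a complete flag of sub-bicomodules. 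It is a module over $F_0^*\otimes(F_0^*)^{\mathrm{op}}$, which is commutative precisely because $F_0$ is cocommutative. This is where the degree-zero hypothesis genuinely enters.
(iv) The resulting flag involves finitely many scalars, hence is defined over a finite extension.

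With (ii)--(iii) supplied, your degree-filtration route is a valid and rather elementary alternative to the paper's. The paper argues dually. In the convolution algebra $R=(\overline{\Bbbk}\otimes F)^*$, the ideal $R_{>0}$ is nilpotent and $R_0$ is commutative, so $R/\operatorname{rad}R\simeq\overline{\Bbbk}^r$ sits in degree zero. Each radical layer then splits into homogeneous one-dimensional sub-bimodules, and annihilators of the resulting flag of homogeneous ideals give the subcoalgebra flag.

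Three smaller points.

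First, the super version of \cite{bartholdi:hopfamen}*{Proposition~1.8} needs both one-sided relations, $\Delta(a)-a\otimes g\in F'\otimes(F'+\Bbbk a)$ and $\Delta(a)-h\otimes a\in(F'+\Bbbk a)\otimes F'$. They give $(\phi\otimes g)\Delta(c)=0$ and $(h\Pi^{|a|}\otimes\phi)\Delta(c)=0$ on $\ker\phi$, and only their intersection shows that $\ker\phi$ is a subcoalgebra. Getting one from the other by ``inserting $\tau$'' presupposes supercocommutativity, which the lemma does not assume. Both relations do, however, come from the same one-dimensional-layer argument.

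Second, to get graded $C_t$ in Lemma~\ref{lem:tensor}, it is simpler to run the density argument on the modular lattice of graded subspaces of $W$, as the paper does. Proving that the unrestricted largest maximizer is graded is harder: your scaling automorphism must act on both tensor factors to preserve $E$. Over a finite field no nontrivial scaling exists, so an extra descent argument would be needed.

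Third, quotient permanence follows most directly from the super Lemma~\ref{lem:tensor}, applied via the injective $\HH$-module map $(\id\otimes q)\Delta_C\colon C\to C\otimes Q$. There is no need to rerun a maximizer construction.
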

\begin{proof}
Every finite subspace of a graded coalgebra is contained in a finite
graded subcoalgebra: take a finite subcoalgebra containing it and then
its homogeneous hull. Indeed, if \(p_n\) denotes projection to degree
\(n\), then \(\Delta(p_nc)=\sum_{i+j=n}(p_i\otimes p_j)\Delta(c)\).
Thus arbitrary finite testing spaces may be enlarged to homogeneous
subcoalgebras containing \(1\); the filtered-space observation preceding
Theorem~\ref{thm:exact} then supplies homogeneous F\o lner spaces.

Complete homogeneous subcoalgebra flags exist by coalgebra duality: let
\(0\ne F=\bigoplus_{n\ge0}F_n\le\HH\) be a finite graded subcoalgebra,
put \(\Omega=\overline{\Bbbk}\), and consider the ordinary convolution
algebra \(R=(\Omega\otimes F)^*\), with product
\[
 (fg)(c)=\sum f(c_{(1)})g(c_{(2)}).
\]
Give \(R\) the grading \(R_n=(\Omega\otimes F_n)^*\), extending
functionals by zero on the other degrees. Then
\(R_iR_j\le R_{i+j}\), so \(R_{>0}\) is a nilpotent two-sided ideal.
Moreover, \(R/R_{>0}\simeq R_0\) is commutative, since \(F_0\) is
cocommutative. Thus, writing \(J_0\) for the radical of \(R_0\), we have
\[
 J:=\operatorname{rad}R=J_0\oplus R_{>0},
 \qquad R/J\simeq\Omega^r
\]
for some \(r\). In particular, \(J\) is homogeneous and nilpotent,
while \(R/J\) is concentrated in degree zero.

For every \(m\ge0\), the radical layer
\(J^m/J^{m+1}\) is a graded bimodule over \(R/J\). Its left and right
actions commute and preserve each degree; equivalently, it is a graded
module over
\[
 (R/J)\otimes_\Omega(R/J)^{\mathrm{op}}\simeq\Omega^{r^2}.
\]
This algebra is split semisimple and concentrated in degree zero, so
each layer is a direct sum of homogeneous one-dimensional sub-bimodules. Refining each radical layer by these
summands and taking inverse images gives a complete flag of homogeneous
two-sided ideals of \(R\). Their ordinary annihilators give, in reverse
order, a complete homogeneous subcoalgebra flag in \(\Omega\otimes F\).
Indeed, if \(I\) is a two-sided ideal and \(c\in I^\perp\), then
\(\Delta(c)\) annihilates \(I\otimes R+R\otimes I\), whence
\(\Delta(c)\in I^\perp\otimes I^\perp\).

Finally, a homogeneous basis adapted to the
flag involves only finitely many coefficients in \(\Omega\). They
generate a finite extension of \(\Bbbk\), over which the entire
homogeneous subcoalgebra flag is defined.

For a homogeneous codimension-one step \(F'\subset F'+\Bbbk a\), write
\[
 \Delta(a)-a\otimes g\in F'\otimes(F'+\Bbbk a),\qquad
 \Delta(a)-h\otimes a\in(F'+\Bbbk a)\otimes F',
\]
where \(g,h\) are even group-like elements. The super module-coalgebra
identity is
\[
 \Delta(ac)=
 \sum(-1)^{|a_{(2)}||c_{(1)}|}
       a_{(1)}c_{(1)}\otimes a_{(2)}c_{(2)}.
\]
More explicitly, let \(C\) be a finite graded subcoalgebra of an
\(\HH\)-module coalgebra, not necessarily \(\HH\)-stable, and let
\(D\le(F'+\Bbbk a)C\) be a graded subcoalgebra. Put
\(V=D+F'C\) and \(\phi:C\to(F'+\Bbbk a)C/V\), \(c\mapsto ac+V\),
as in \cite{bartholdi:hopfamen}*{Proposition~1.8}.
On underlying vector spaces, using ordinary tensor products of linear
operators, the first triangular relation gives
\((\phi\otimes g)\Delta(c)=0\) on \(\ker\phi\); the second gives
\((h\Pi^{|a|}\otimes\phi)\Delta(c)=0\), where
\(\Pi(c)=(-1)^{|c|}c\). Since \(g\) and \(h\Pi^{|a|}\) are
invertible on the ambient module, their restrictions to \(C\) are injective,
and the two tensor-kernel arguments are unchanged.
Their intersection shows that \(\ker\phi\) is a graded subcoalgebra.
The dimension inequalities in \cite{bartholdi:hopfamen}*{Proposition~1.8}
therefore telescope unchanged along the homogeneous flag.
To remove the splitting assumption, extend scalars to a finite
splitting field. The tensor-amplification and base-change arguments of
\cite{bartholdi:hopfamen}*{Lemmas~1.4--1.5} apply with a purely even
field-extension factor and graded subspaces, so preserve semistability
on the lattice of graded subcoalgebras. Apply the split transfer
inequality to scalar extensions of the original graded data and descend
it by preservation of dimensions and intersections.
The density and comparison arguments of
\cite{bartholdi:hopfamen}*{Lemmas~1.2--1.3 and the proof of Theorem~1.1}
then give rounding on the lattice of graded subcoalgebras over the
original field; no descent of the rounded subcoalgebra is needed.

Likewise, in Lemma~\ref{lem:tensor}, the leading action on
\(U\otimes V\) is \(h\Pi_U^{|a|}\otimes a\); its first factor is
invertible. Use graded subspaces in the density argument, and descend
the extracted subspace's expansion ratio by
\cite{bartholdi:hopfamen}*{Lemma~1.7}; its coefficient filtration
preserves homogeneity. This ratio descent is used only for subspaces,
not for subcoalgebras.
For an even counital coalgebra quotient \(q\colon C\to Q\), the map
\((\id\otimes q)\Delta_C\colon C\to C\otimes Q\) remains an injective
\(\HH\)-module map with the super diagonal action, proving quotient
permanence. Finally, the normal-basis proof uses only rounding,
colinearity and dimensions, and is unchanged with graded spaces.
\end{proof}

\subsection{Permanence and ordinary amenability}

\begin{thm}\label{thm:exterior}
Proposition~\ref{prop:locally-finite}, Theorems~\ref{thm:ext=>amen},
\ref{thm:amen=>reg} and~\ref{thm:exact}, and
Corollary~\ref{cor:unions} remain valid with ``extensively amenable'' replaced by ``$\wedge$-extensively amenable''.
\end{thm}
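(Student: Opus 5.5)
The plan is to go through the five results one at a time, replacing $\Sym$ by $\bigwedge$ and $M\wr\A$ by $M\wr_\wedge\A$. Wherever the symmetric proofs used coalgebra arguments over cocommutative Hopf algebras, I will invoke Lemma~\ref{lem:pointed} instead. By the filtered-space observation, all Følner spaces may be taken homogeneous for the degree grading, so the super signs only enter through parities of homogeneous elements.

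\emph{Proposition~\ref{prop:locally-finite}.} Take $E_n=\bigwedge^{\le n}V$ for a finite submodule $V$ of dimension $q$. Since $\bigwedge V$ is finite, the simplest choice is $E=\bigwedge V$ itself. It is stable under $\A$ and under multiplication by the variables in $V$, so its boundary is zero.

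\emph{Theorem~\ref{thm:ext=>amen}.} Use the odd universal derivation $d\colon\bigwedge M\to\bigwedge M\otimes M$, with $d(m_1\wedge\cdots\wedge m_k)=\sum_i\pm\, m_1\wedge\cdots\widehat{m_i}\cdots\wedge m_k\otimes m_i$. It is $\A$-linear because $\A$ is even. The delicate point is injectivity of $u\mapsto(d(u),d(m\wedge u))$. The symmetric argument uses $d(mu)=\pm m\,d(u)\pm u\otimes m$, and this Leibniz rule survives with signs. If both coordinates vanish, then $u\otimes m=0$, so $u=0$. Note that $m\wedge m=0$ causes no trouble here: we never need $mu\neq0$, only the injectivity of the pair. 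The remaining dimension count is unchanged, and extraction comes from the super version of Lemma~\ref{lem:tensor} given by Lemma~\ref{lem:pointed}.

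\emph{Theorem~\ref{thm:amen=>reg}.} This is where I expect the main obstacle, since Theorems~2.4 and~2.5 of \cite{bartholdi:hopfamen} were not transferred in Lemma~\ref{lem:pointed}. I would avoid them directly. First, $\bigwedge M$ is a locally finite module over itself: every finite subspace lies in $\bigwedge V$ for finite $V$, which is a finite subalgebra. Next, apply Lemma~\ref{lem:induction} in its super form with $\K=\A$ (purely even) and $\B=M\wr_\wedge\A$. Here $Q=\B/\B\ker\varepsilon_\A\simeq\bigwedge M$, with $\sigma$ the inclusion, and $\Theta$ is the multiplication $\bigwedge M\otimes\A\to\B$. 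This $\Theta$ is an isomorphism of right $\A$-modules and left $Q$-comodules, exactly as in the proof of Theorem~\ref{thm:exact}. The hypothesis on $Q$ requires ${}_\B\bigwedge M$ amenable, which is circular. So instead I would argue directly, as in the proof of Lemma~\ref{lem:induction}, with $C=\bigwedge V$ for $V$ containing the variables of the testing space. Then $F\sigma(C)\le\sigma(C')T$, where $C'=\bigwedge(V+TV)$ and $T\le\A$ is finite. Choose $E\le\A$ Følner for a suitable finite set so that $TV$ stays inside $\bigwedge(\A E\,\cdot)$. Concretely, take $W=\bigwedge(\A_0 m)$-type spaces: pick $E$ Følner in ${}_\A\A$, set $V=E\cdot\{m_j\}$, and use $\bigwedge V$. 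This works if $\dim\bigwedge(V+SV)/\dim\bigwedge V=2^{\dim(V+SV)-\dim V}$ stays bounded, which holds when the boundary $\dim(SE+E)-\dim E$ is bounded. That is false in general, so this route needs the exterior analogue of the cleft-extension theorem. The more honest plan is therefore to assert that Theorem~2.5 of \cite{bartholdi:hopfamen} transfers by the same normal-basis and rounding proof, using Lemma~\ref{lem:pointed}, with $\bigwedge M$ amenable because it is locally finite over itself. Then Corollary~\ref{cor:all-modules} transfers via the super Lemma~\ref{lem:tensor}, using the super twist $T$.

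\emph{Theorem~\ref{thm:exact} and Corollary~\ref{cor:unions}.} Repeat the proofs verbatim. We have $\gr\bigwedge N\simeq\bigwedge(M\oplus P)$ for the two-step filtration, the projections give quotient permanence by Lemma~\ref{lem:pointed}, and a linear section $P\to N$ gives a super algebra and coalgebra section $\bigwedge P\to\bigwedge N$. The normal form $\bigwedge N\simeq\bigwedge P\otimes\bigwedge M$ gives $\Theta$, and the super Lemma~\ref{lem:induction} finishes. The corollary follows formally.
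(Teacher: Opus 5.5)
Your treatment of Proposition~\ref{prop:locally-finite}, Theorem~\ref{thm:exact} and Corollary~\ref{cor:unions} is the paper's. Your direct use of the odd derivation in Theorem~\ref{thm:ext=>amen} is also fine. The paper factors the same map as the projection onto $\bigwedge^{>0}M$ followed by the injective $(n-1,1)$ coproduct component. Your signed Leibniz rule, applied to each homogeneous component, gives the same injectivity.

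The gap is in the exterior form of Theorem~\ref{thm:amen=>reg}. After your first attempt fails, you simply assert that \cite{bartholdi:hopfamen}*{Theorem~2.5} carries over to Hopf superalgebras. That theorem is not among the results transferred in Lemma~\ref{lem:pointed}, so as written this step is unproved.

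The missing idea is to reverse the roles you tried in Lemma~\ref{lem:induction}. With $\K=\A$ the quotient is $\bigwedge M$ itself, hence the circularity. Instead take $\B=M\wr_\wedge\A$ and $\K=\bigwedge M$.
\begin{itemize}
\item Then $\B\ker\varepsilon_\K=\bigwedge^{>0}M\smashp\A$, so $Q=\B\otimes_\K\Bbbk\simeq\A$. Here $\B$ acts through the algebra map $r\smashp a\mapsto\varepsilon(r)a$, so $Q$ is amenable because $\A$ is.
\item The Hopf section $a\mapsto 1\smashp a$ gives $\Theta(a\otimes k)=\sum a_{(1)}k\smashp a_{(2)}$. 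It is right $\K$-linear and $Q$-colinear, and by cocommutativity it is bijective with inverse $r\smashp a\mapsto\sum a_{(2)}\otimes S(a_{(1)})r$.
\item Your own remark that $\bigwedge M$ is locally finite over itself supplies the remaining hypothesis, with $V=\K$. The super Lemma~\ref{lem:induction}, which Lemma~\ref{lem:pointed} does provide, then makes ${}_\B\B=\B\otimes_\K\K$ amenable.
\item Finally, $\bigwedge M\simeq\B\otimes_\A\Bbbk$ is a quotient module coalgebra of $\B$, so the super quotient permanence in Lemma~\ref{lem:pointed} finishes.
\end{itemize}
Your detour through a super version of Corollary~\ref{cor:all-modules} would also work, but it is unnecessary.
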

\begin{proof}
For a locally finite module, \(\bigwedge V\), with \(V\) a sufficiently
large finite invariant subspace, is already a finite invariant space
for the given test.

For an exact sequence, the weight filtration used in
Theorem~\ref{thm:exact} again has associated graded
\(\bigwedge(M\oplus P)\). (In applying Lemma~\ref{lem:pointed},
the nonnegative grading is still total exterior degree.) Quotient permanence from
Lemma~\ref{lem:pointed} gives descent to \(M\) and \(P\).
For ascent put \(\B=N\wr_\wedge\A\), \(\K=M\wr_\wedge\A\),
and \(Q=\bigwedge P\). A linear splitting \(P\to N\) induces a
graded Hopf-superalgebra section
\(\sigma\colon\bigwedge P\to\bigwedge N\le \B\).
Exterior normal form gives the right-\(\K\) isomorphism
\(\Theta(p\otimes k)=\sigma(p)k\), and
\[
 (q\otimes\id)\Delta_\B(\sigma(p))
   =\sum p_{(1)}\otimes\sigma(p_{(2)}).
\]
Thus \(\Theta\) is also a left-\(Q\) comodule map.
The super version of Lemma~\ref{lem:induction} proves ascent
exactly as before. The direct-sum and directed-union arguments
are unchanged.

If $\A$ is amenable, put \(\B=M\wr_\wedge\A\) and \(\K=\bigwedge M\).
The algebra \(\K\) is locally finite-dimensional and therefore amenable.
The quotient \(\B\otimes_\K\Bbbk=\A\), with its regular action, is amenable.
The Hopf section \(\A\to \B\) gives the right-\(\K\) normal basis
\(\A\otimes \K\to \B\); bijectivity follows from the smash-product
relations and the bijective antipode of $\A$.
The super induction lemma, applied to \(V=\K\), makes \(\B\) amenable.
Its quotient module coalgebra
\(\B\otimes_\A\Bbbk\simeq\bigwedge M\) is amenable by
Lemma~\ref{lem:pointed}.

It remains to pass from $\wedge$-extensive amenability of $M$, for \(M\ne0\), to ordinary
amenability. Put \(J=\bigwedge^{>0}M\), and let
\(\pi\colon\bigwedge M\to J\) be the $\A$-linear projection. Fix
\(0\ne m\in M\), a finite subspace \(F\le\A\) containing \(1\), and
\(\eta>0\). Choose a finite \(E\le\bigwedge M\) with
\(\dim(FE+F(mE))<(1+\eta)\dim E\), and put
\(E'=E+mE\) and \(W=\pi(E')\). As in Theorem~\ref{thm:ext=>amen},
\[
 E\to W\oplus W,\qquad
 u\mapsto\bigl(\pi(u),\pi(mu)\bigr)
\]
is injective: \(\pi(u)=0\) gives \(u\in\Bbbk1\), and then
\(\pi(mu)=mu=0\) forces \(u=0\). Thus \(\dim E\le2\dim W\).
The quotient-surjection argument of that theorem, applied to \(\pi\), gives
\[
 \dim(FW/W)\le\dim(FE'/E')<\eta\dim E\le2\eta\dim W.
\]
Letting \(\eta\) tend to zero proves amenability of \(J\).
The coproduct components of bidegree \((n-1,1)\) give an $\A$-linear
injection
\[
 J\to(\bigwedge M)\otimes M,\qquad
 m_1\wedge\cdots\wedge m_n\mapsto
 \sum_{i=1}^n(-1)^{n-i}
 (m_1\wedge\cdots\widehat m_i\cdots\wedge m_n)\otimes m_i.
\]
It is injective, since distinct wedge basis
monomials have disjoint nonempty image supports. Apply
Lemma~\ref{lem:tensor}.
\end{proof}

\subsection{Permutation modules and comparison}

\begin{thm}\label{thm:comparison-perm}
For every field \(\Bbbk\) and every \(G\)-set \(X\),
\[
 \Bbbk[X]\text{ is $\wedge$-extensively amenable}\quad\Longleftrightarrow\quad
 G\curvearrowright X\text{ is extensively amenable}.
\]
\end{thm}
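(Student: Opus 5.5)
The plan is to mirror the proof of Theorem~\ref{thm:group}, with the lamp group \(L=(\Z/2)^{(X)}\) in place of \(\Z^{(X)}\). By the lamp-functor theorem \cite{juschenko-mattebon-monod-delasalle:extensive}*{Theorem~1.3}, \(G\curvearrowright X\) is extensively amenable if and only if the affine action \(L\rtimes G\curvearrowright L\) is amenable. By the permutation-module equivalence \cite{bartholdi:hopfamen}*{\S3}, this holds if and only if \(R=\Bbbk[L]\) is amenable under \(R\smashp\Bbbk[G]\simeq\Bbbk[L\rtimes G]\). It therefore suffices to compare the \(R\smashp\Bbbk[G]\)-module \(R\) with the \(\bigwedge\Bbbk[X]\smashp\Bbbk[G]\)-module \(\bigwedge\Bbbk[X]\).

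The key point is that \(R\) is the group algebra of an elementary abelian \(2\)-group. Its augmentation filtration depends on the characteristic, so I would split into two cases.

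\emph{Characteristic two.} Here \(z_x=u_x-1\) satisfies \(z_x^2=u_x^2-1=0\), and the \(z_x\) commute. Hence \(R=\Bbbk[z_x:x\in X]/(z_x^2)\), which in characteristic two is exactly \(\bigwedge\Bbbk[X]\), with \(x\mapsto z_x\). This identification is \(G\)-equivariant, and it is an algebra isomorphism compatible with the smash products. The two module structures therefore coincide after the change of generators \(u_x=1+z_x\). The F\o lner transfer in each direction is the operator-boundary argument of Theorem~\ref{thm:group}: \eqref{eq:operator-boundary} applies because \(u_x\) is invertible and \(W+u_xW=W+z_xW\). So the equivalence is immediate in this case.

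\emph{Characteristic not two.} Now \(R\) is semisimple: \(R\simeq\Bbbk^{(\text{characters})}\) locally, so the augmentation filtration collapses and cannot be used. I expect this to be the main obstacle. My first attempt would use idempotents \(e_x=(1-u_x)/2\). These satisfy \(e_x^2=e_x\), and \(R=\Bbbk[e_x]/(e_x^2-e_x)\). I would filter \(R\) by degree in the \(e_x\). The associated graded is \(\Bbbk[\bar e_x]/(\bar e_x^2)\), a commutative algebra with square-zero generators. This is not \(\bigwedge\Bbbk[X]\), since the latter is supercommutative: \(\bar e_x\bar e_y=\bar e_y\bar e_x\) versus \(xy=-yx\).

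To handle the sign, I would compare dimension counts rather than algebras. Both algebras have the basis of squarefree monomials indexed by \(\Pf(X)\). Multiplication by a generator, resp.\ by \(x\), sends a monomial to \(\pm\) another monomial or to \(0\), and the \(G\)-action permutes monomials up to sign. Expansion ratios of \emph{monomial} subspaces \(F\) (spanned by basis vectors indexed by a family \(\mathcal F\subseteq\Pf(X)\)) are thus computed combinatorially on \(\mathcal F\), identically in both algebras, because signs do not affect spans of monomials. The reduction then requires two steps:
\begin{enumerate}
\item Show that F\o lner spaces may be taken monomial in \(\bigwedge\Bbbk[X]\) and in \(\gr R\). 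I would try the standard leading-term or Gröbner degeneration with respect to a well-order on \(\Pf(X)\). This is a further filtered-space observation as preceding Theorem~\ref{thm:exact}: it replaces \(E\) by its span of leading monomials, which is valid since the test operators are monomial.
\item Pass between \(R\) and \(\gr R\). For \(\gr R\Rightarrow R\), use the lift \(\bar e_x\mapsto e_x\): the span of monomials in \(e_x\) satisfies \(W+u_xW=W+e_xW\), and the operator-boundary argument applies. For \(R\Rightarrow\gr R\), use the filtered observation directly.
\end{enumerate}
Chaining these gives the equivalence. If the degeneration step fails, my fallback is a direct argument: a monomial Følner family for \(\bigwedge\Bbbk[X]\) is exactly a Følner family for the action of \((\Z/2)^{(X)}\rtimes G\) on \(\Pf(X)\) restricted to the relation \(B\mapsto B\cup\{x\}\). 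I would then invoke the \(\Z/2\) lamp characterization for that combinatorial action directly.
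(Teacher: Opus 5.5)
Your characteristic-two case is fine. There \(x\mapsto u_x-1\) is a \(G\)-equivariant algebra isomorphism \(\bigwedge\Bbbk[X]\simeq\Bbbk[(\mathbb Z/2)^{(X)}]\), which the paper records after the theorem. So the two smash-product modules coincide, and the permutation-module equivalence finishes.

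In characteristic \(\ne2\), everything rests on Step~1, and that step does not go through. Replacing a F\o lner space \(E\) by the span of its leading monomials is not a filtered-space observation. It needs \(\mathrm{lt}(sv)=s\,\mathrm{lt}(v)\) for every test operator \(s\), i.e.\ that \(s\) respects the order. This is harmless for multiplication by \(x\) under a monomial order. But \(G\) in general preserves no order on \(X\) or on \(\Pf(X)\), and being monomial does not help. For instance, let \(g\) be an involution exchanging \(a_i\leftrightarrow b_i\) for \(1\le i\le n\). Then \(E=\langle a_i+b_i\rangle\le\bigwedge^1\Bbbk[X]\) is \(g\)-invariant. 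Yet for every order its leading-term set contains one point of each pair, and \(g\) maps it onto the complementary points.

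The conclusion of Step~1 is true a posteriori. But combined with your Step~2 and the permutation-module equivalence, it \emph{is} the hard direction (\(\wedge\)-extensive amenability \(\Rightarrow\) amenability of the affine action), so it cannot be obtained by a degeneration.

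Your fallback presupposes the same monomial families. Moreover, being F\o lner for the partial maps \(B\mapsto B\cup\{x\}\) does not imply being F\o lner for the toggles \(B\mapsto B\triangle\{x\}\) that define the affine action. The family \(\{B: x\in B\subseteq Y\}\) is invariant under \(B\mapsto B\cup\{y\}\) for all \(y\in Y\), but the toggle at \(x\) moves it off itself entirely. (For the easy direction you do not need Step~1: take F\o lner sets of the affine action and span the corresponding wedge monomials, as the paper does.)

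Two ingredients are missing. First, a mechanism that replaces leading terms for a group action preserving no order. Second, invertible test operators for that mechanism to apply to. The paper obtains both inside \(\bigwedge\Bbbk[X]\) itself:
\begin{enumerate}
\item By coalgebraic rounding (Lemma~\ref{lem:pointed}), the F\o lner space may be taken to be a finite graded subcoalgebra \(C\).
\item Every subcoalgebra is stable under the contractions \(\iota_x=((x^*\circ p_1)\otimes\id)\Delta\). Hence \(c_xC\le C+xC\) for the signed toggles \(c_x=\varepsilon_x+\iota_x\), so \(C\) is F\o lner for these invertible monomial operators and for \(G\).
\item Gromov's ordering argument for monomial group actions then gives amenability of the underlying permutation action on \(\Pf(X)\). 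That action is the affine action of \((\mathbb Z/2)^{(X)}\rtimes G\).
\end{enumerate}
Your idempotent filtration and the comparison with \(\gr R\) cannot stand in for this. The sign discrepancy between \(\bigwedge\Bbbk[X]\) and \(\gr R\) disappears only on monomial subspaces, and those are exactly what you cannot produce without such an argument.
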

\begin{proof}
Put \(L=(\mathbf Z/2)^{(X)}\), identified with \(\mathcal P_f(X)\) by support.
By~\cite{juschenko-mattebon-monod-delasalle:extensive}*{Theorem~1.3}, the action \(G\curvearrowright X\) is extensively amenable if and only if the affine action \(L\rtimes G\curvearrowright L\) is amenable.

Suppose first that this affine action is amenable. Choose an order on \(X\)
and the resulting wedge basis
\[
(e_B)_{B\in\mathcal P_f(X)}
\]
of \(\bigwedge \Bbbk[X]\). A finite testing space \(S\) in
\(\bigwedge \Bbbk[X]\# \Bbbk[G]\) is contained in the span of finitely many elements
\(r_i\# g_i\), where each \(r_i\) is an exterior monomial. Write
\(T_i=\operatorname{supp}(r_i)\in L\), and put \(h_i=(T_i,g_i)\in L\rtimes G\).

Choose a finite F\o lner set \(F\subseteq L\) for the finitely many
\(h_i\), and set
\[
E=\langle e_B:B\in F\rangle_\Bbbk.
\]
For \(B\in F\), the vector \((r_i\# g_i)e_B\) is either \(0\), or a
nonzero scalar multiple of \(e_{T_i\triangle g_iB}=e_{h_iB}\); 
indeed, it is nonzero precisely when \(T_i\cap g_iB=\varnothing\). Hence
\[
E+S E
\le
\left\langle e_B:B\in F\cup\bigcup_i h_iF\right\rangle_\Bbbk.
\]
Choosing \(F\) sufficiently F\o lner therefore makes \(E\) F\o lner for
the original testing space. Thus \(\Bbbk[X]\) is \(\bigwedge\)-extensively
amenable.

Conversely, suppose that \(\Bbbk[X]\) is \(\bigwedge\)-extensively amenable. For
\(x\in X\), define
\[
\varepsilon_x\colon\bigwedge \Bbbk[X]\to\bigwedge \Bbbk[X],
\qquad
\varepsilon_x(v)=x\wedge v,
\]
let \(p_1\colon\bigwedge \Bbbk[X]\to \Bbbk[X]\) be projection onto degree one,
and let \(x^*\colon \Bbbk[X]\to \Bbbk\) be the coordinate functional of \(x\).
Define contractions by
\[
 \iota_x=((x^*\circ p_1)\otimes\id)\Delta,
\]
and put
\[
c_x=\varepsilon_x+\iota_x.
\]
On the wedge basis, \(c_x(e_B)=\pm e_{B\triangle\{x\}}\), so \(c_x\) is a signed permutation whose underlying permutation is the toggle
\(B\mapsto B\triangle\{x\}\). Likewise \(G\) acts by signed permutations, with
underlying permutation \(B\mapsto gB\). Let
\(\Gamma\le\operatorname{GL}(\bigwedge \Bbbk[X])\) be the group generated by
all \(c_x\) and the operators induced by \(G\). Its underlying permutation
action on \(\mathcal P_f(X)\) has the same image as the affine action
\(L\rtimes G\curvearrowright L\).

Let \(T\subseteq X\) and \(S\subseteq G\) be finite. By Lemma~\ref{lem:pointed}, there is a nonzero finite graded subcoalgebra \(C\leq\bigwedge \Bbbk[X]\) which is arbitrarily F\o lner for multiplication by the \(x\in T\) and for the \(g\in S\). Since
\(C\) is a subcoalgebra, every coefficient operator of its coproduct preserves
\(C\); in particular,
\[
\iota_x C\le C.
\]
Consequently
\[
c_xC\le C+xC,
\]
so the same \(C\) is arbitrarily F\o lner for the signed permutation operators
\(c_x\), \(x\in T\), and for \(g\in S\). The boundary estimates
\eqref{eq:operator-boundary}, applied to words in these operators, show that
this monomial representation of \(\Gamma\) is amenable.

Gromov's ordering argument for linear actions
\cite{gromov:linear}*{\S3.6} applies unchanged to monomial actions:
the nonzero scalar coefficients do not affect the leading coordinates.
Therefore amenability of this monomial representation implies amenability of
its underlying permutation action on \(\mathcal P_f(X)\). Since this action
has the same image as \(L\rtimes G\curvearrowright L\), the affine action
is amenable. By \cite{juschenko-mattebon-monod-delasalle:extensive}*{Theorem~1.3},
\(G\curvearrowright X\) is extensively amenable.
\end{proof}

In characteristic two the binary-lamp correspondence is particularly
explicit:
\[
 \bigwedge\Bbbk[X]\xrightarrow{\ \simeq\ }
 \Bbbk[(\mathbb Z/2)^{(X)}],\qquad x\mapsto u_x-1.
\]
This is a \(G\)-equivariant \emph{algebra} isomorphism, not a Hopf
isomorphism for the primitive exterior coproduct:
\(\Delta(u_x-1)\) has the additional term
\((u_x-1)\otimes(u_x-1)\).

\begin{prop}\label{prop:char-two}
If $\A$ is a cocommutative Hopf algebra in characteristic $2$, then every extensively amenable $\A$-module is $\wedge$-extensively amenable. If $\A$ is generated as an algebra by its primitive elements, the converse also holds.
\end{prop}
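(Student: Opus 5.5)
The plan has two steps: exhibit $\bigwedge M$ as a module-coalgebra quotient of $\Sym(M)$, which gives the first implication, and reverse the quotient by normal-basis induction over the Frobenius subalgebra, which gives the converse. In characteristic $2$ all supersigns are trivial, so $M\wr_\wedge\A$ is an ordinary cocommutative Hopf algebra and $\bigwedge M$ is an ordinary module coalgebra.

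\emph{Forward implication.} Let $I\le\Sym(M)$ be the ideal generated by the squares $m^2$, $m\in M$, so that $\Sym(M)/I\simeq\bigwedge M$.
\begin{itemize}
\item $I$ is a Hopf ideal. Indeed $\Delta(m^2)=m^2\otimes1+2m\otimes m+1\otimes m^2=m^2\otimes 1+1\otimes m^2$, so each $m^2$ is primitive.
\item $I$ is $\A$-stable. We have $a(m^2)=\sum (a_{(1)}m)(a_{(2)}m)$, and by cocommutativity $\sum a_{(1)}m\otimes a_{(2)}m$ is a symmetric tensor. In characteristic $2$ the symmetric tensors are spanned by the $u\otimes u$ and the $u\otimes v+v\otimes u$. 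Under multiplication these map to $u^2$ and to $2uv=0$, so $a(m^2)$ lies in the span of squares.
\end{itemize}
Consequently $\Sym(M)\to\bigwedge M$ is a surjective counital module-coalgebra map, and $M\wr\A\to M\wr_\wedge\A$ is a surjective Hopf map through which the action on $\bigwedge M$ factors. The quotient theorem \cite{bartholdi:hopfamen}*{Theorem~1.9} then makes $\bigwedge M$ amenable over $M\wr\A$. Since $M\wr\A$ surjects onto $M\wr_\wedge\A$, amenability over $M\wr\A$ is the same as amenability over $M\wr_\wedge\A$.

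\emph{Converse.} Assume $\A$ is generated by primitives. For primitive $x$ we get $x(m^2)=2m(xm)=0$, so $\A$ acts on squares through $\varepsilon$. The subalgebra $P\le\Sym(M)$ generated by the squares (the Frobenius twist $\Sym(M^{(1)})$) is therefore central in $M\wr\A$, $\A$-trivial, and a Hopf subalgebra.

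I will apply Lemma~\ref{lem:induction} with the following data.
\begin{itemize}
\item $\B=M\wr\A$ and $\K=P\smashp\A=P\otimes\A$, a Hopf subalgebra of $\B$.
\item The quotient is $Q=\B/\B\ker\varepsilon_\K$. Since $\ker\varepsilon_\K$ contains the squares and $\ker\varepsilon_\A$, we get $Q\simeq\Sym(M)/I=\bigwedge M$ with its $\B$-action factoring through $M\wr_\wedge\A$. By hypothesis $Q$ is amenable over $\B$.
\item For the section, fix an ordered basis of $M$ and let $\sigma$ send a wedge monomial $e_B$ to the corresponding square-free monomial.
\item $\Theta\colon\bigwedge M\otimes\K\to\B$ is bijective and right $\K$-linear by the normal form: every monomial is uniquely a square-free monomial times a monomial in squares.
\item The $\K$-module is $V=P$, with $\A$ acting trivially. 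It is amenable by the argument of Proposition~\ref{prop:locally-finite}: polynomial balls in finitely many squares work.
\end{itemize}
Finally, the action map $\B\otimes_\K P\to\Sym(M)$ is an isomorphism of $\B$-modules, again by the normal form. So $\Sym(M)$ is amenable and $M$ is extensively amenable.

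\emph{Main obstacle.} The step I expect to need care is the left $Q$-colinearity of $\Theta$. The map $\sigma$ is not multiplicative, since the product of two square-free monomials can contain squares. I would verify colinearity directly on square-free monomials. The coproduct of a product of distinct basis elements is the sum over splittings into two sub-monomials. Both sides are square-free, so applying $q\otimes\id$ gives exactly $\sum p_{(1)}\otimes\sigma(p_{(2)})$. Colinearity then extends to all of $\bigwedge M\otimes\K$ using $q(bk)=q(b)\varepsilon(k)$.
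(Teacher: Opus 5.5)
Your proposal is correct and follows the paper's proof essentially step for step. The forward direction uses the $\A$-stable Hopf ideal of squares and the module-coalgebra quotient theorem. The converse uses Lemma~\ref{lem:induction} with $\B=M\wr\A$, $\K=P\smashp\A$, the square-free coalgebra section, and $V=P$. The only deviation is how you show $P$ is amenable over $\K$: you use polynomial balls, which amounts to Proposition~\ref{prop:locally-finite} applied to the trivial $\A$-module spanned by squares. The paper instead cites \cite{bartholdi:hopfamen}*{Theorem~2.4}; your argument is equally valid and slightly more elementary.
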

\begin{proof}
The quotient
\(\Sym(M)\to\Sym(M)/(m^2:m\in M)=\bigwedge M\)
is an $\A$-equivariant Hopf map: in characteristic two,
cocommutativity makes the ideal generated by squares $\A$-stable.
The module-coalgebra quotient theorem
\cite{bartholdi:hopfamen}*{Theorem~1.9} gives the first implication.

For the converse put \(R=\Sym(M)\), let \(R^{[2]}\) be the subalgebra
generated by squares, and put \(\B=R\smashp\A\),
\(\K=R^{[2]}\smashp\A\).
Primitives act as derivations and annihilate every square, so $\A$
acts on \(R^{[2]}\) through its counit. The commutative Hopf algebra
\(R^{[2]}\) is amenable by \cite{bartholdi:hopfamen}*{Theorem~2.4};
since the \(\K\)-action factors through its regular action,
it is an amenable \(\K\)-module. Moreover \(\B\otimes_\K\Bbbk=\bigwedge M\),
which is amenable by hypothesis.
Choose a basis of \(M\). Sending a wedge basis monomial to its
squarefree polynomial gives a coalgebra section; separating even
exponents from squarefree exponents gives the colinear right-\(\K\)
normal basis. Lemma~\ref{lem:induction} applied to \(V=R^{[2]}\)
proves amenability of \(\B\otimes_\K R^{[2]}\simeq R\).
\end{proof}

\begin{rem}
Replacing squares by \(p\)-th powers gives the analogous equivalence
between the symmetric and truncated symmetric constructions in
characteristic \(p\), for primitively generated $\A$.
\end{rem}

I still do not know whether the symmetric and exterior definitions are equivalent.


\bib{bartholdi:aa1}{article}{
  author={Bartholdi, Laurent},
  title={On amenability of group algebras, I},
  date={2008},
  journal={Isr. J. Math.},
  volume={168},
  pages={153\ndash 165},
  review={\MR {2448055 (2010a:43001)}},
  doi={10.1007/s11856-008-1061-7},
  eprint={arXiv:math/0608302},
}

\bib{bartholdi:amenability}{article}{
  author={Bartholdi, Laurent},
  title={Amenability of groups and $G$-sets},
  conference={ title={Sequences, groups, and number theory}, },
  book={ series={Trends Math.}, publisher={Birkh\"{a}user/Springer, Cham}, },
  date={2018},
  pages={433\ndash 544},
  review={\MR {3799933}},
  eprint={arXiv:1705.04091},
}

\bib{bartholdi:hopfamen}{article}{
  author={Bartholdi, Laurent},
  title={Amenability of Lie, Group and Hopf algebras},
  date={2026},
  eprint={arXiv:2609.10373},
  status={preprint},
}

\bib{elek:amenaa}{article}{
  author={Elek, G\'abor},
  title={The amenability of affine algebras},
  journal={J. Algebra},
  volume={264},
  date={2003},
  number={2},
  pages={469\ndash 478},
  issn={0021-8693},
  review={\MR {1981416 (2004d:16043)}},
}

\bib{gromov:linear}{article}{
  author={Gromov, Misha},
  title={Entropy and isoperimetry for linear and non-linear group actions},
  journal={Groups Geom. Dyn.},
  volume={2},
  date={2008},
  number={4},
  pages={499--593},
  issn={1661-7207},
  review={\MR {2442946}},
  doi={10.4171/GGD/48},
}

\bib{juschenko-mattebon-monod-delasalle:extensive}{article}{
  author={Juschenko, Kate},
  author={{Matte Bon}, Nicol\'as},
  author={Monod, Nicolas},
  author={Salle, Mikael~{de la}},
  title={Extensive amenability and an application to interval exchanges},
  date={2015},
  eprint={arXiv:1503.04977},
}

\bib{passi:gr}{book}{
  author={Passi, Inder Bir~S.},
  title={Group rings and their augmentation ideals},
  series={Lecture Notes in Mathematics},
  publisher={Springer},
  address={Berlin},
  date={1979},
  volume={715},
  isbn={3-540-09254-4},
}



\begin{bibdiv}
\begin{biblist}
\bibselect{math}
\end{biblist}
\end{bibdiv}
\end{document}